\title{Clifford-Klein forms and a-hyperbolic rank}
\author{Maciej Boche\'nski and Aleksy Tralle}
\begin{document}

\newtheorem{theorem}{Theorem}
\newtheorem{proposition}{Proposition}
\newtheorem{lemma}{Lemma}
\newtheorem{definition}{Definition}
\newtheorem{example}{Example}
\newtheorem{note}{Note}
\newtheorem{corollary}{Corollary}
\newtheorem{remark}{Remark}

\maketitle{}

\begin{abstract}
The purpose of this article is to introduce and investigate properties of a tool (the a-hyperbolic rank) which enables us to obtain new examples of homogeneous spaces $G/H$ which admit and do not admit a discontinuous action of a non virtually-abelian discrete subgroup. We achieve this goal by exploring in greater detail the technique of adjoint orbits developed by Okuda combined with the well-known conditions of Benoist. We find easy-to-check conditions on $G$ and $H$ expressed directly in terms of the Satake diagrams of the corresponding Lie algebras, in cases when $G$ is a real form of a complex Lie group of type $A_n$, $D_{2k+1}$ or $E_6$.
One of the advantages of this approach is the fact, that we don't need to know the embedding of $H$ into $G.$ Using the a-hyperbolic rank we also show, that the homogeneous space $E_6{}^{\text{IV}}/H$ of reductive type admits a discontinuous action of a non virtually abelian discrete subgroup if and only if $H$ is compact. Also, inspired by the work of Okuda on symmetric spaces $G/H$ we find a list of simply connected $3$-symmetric spaces admitting a discontinuous action of a non virtually-abelian discrete subgroup. This list yields almost complete classification of such spaces (there is one exception). 
\end{abstract}

\section{Introduction}\label{sect:intro}

Let $G/H$ be a homogeneous space of a connected semisimple real linear Lie group $G.$ We are interested in the situation when there exists a discrete and not virtually-abelian subgroup $\Gamma\subset G$ acting discontinuously on $G/H$. We say that $G/H$ admits a compact Clifford-Klein form, if there exists a discrete subgroup $\Gamma\subset G$ acting discontinuously on $G/H$ and with compact quotient $\Gamma\setminus G/H$. It is known that if $G/H$ admits compact Clifford-Klein forms, it necessarily admits discontinuous action of non virtually-abelian discrete subgroup (but not {\it vice versa}). The problem of the existence of Clifford-Klein forms is very important in several research areas. It is not our intention here to describe the whole topic, therefore, we refer to the excellent survey \cite{ko} and papers \cite{kob1, kob2}.     

The purpose of this note is to demonstrate a simple way of checking when certain types of homogenous spaces $G/H$ admit or do not admit discontinuous actions of non virtually-abelian subgroups. Our method is based on a more detailed exploration of the technique of adjoint orbits from \cite{ok}, together with the well-known sufficient condition of Benoist \cite{ben} . It yields  simple  conditions for the existence and the non-existence of such actions expressed in terms of the a-hyperbolic rank, which depends on the Satake diagrams of real Lie algebras $\mathfrak{g}$ and $\mathfrak{h}$. As a consequence we get new examples of homogeneous spaces $G/H$ which admit discontinuous actions of non virtually-abelian discrete subgroups as well as  of $G/H$ which do not. Our main result is stated in Theorem \ref{twg}. Although this theorem gives only sufficient conditions, it resolves the problem of existence of discontinuous actions of non virtually-abelian subgroups in vast classes of examples. As an application we give a list of all simple and connected real 3-symmetric spaces (i.e. homogeneous spaces $G/H$ generated by automorphisms of order 3 of a simple and connected real Lie group $G$) admitting such actions (Table 2) with one exception. Our method does not work for $M=SO(2k+1,2k+1)/(U(1,1)\times SO(2k-1,2k-1))$. Recall that Okuda classified symmetric spaces with this property in \cite{ok}. Generalized symmetric spaces constitute a  large class of homogeneous spaces whose properties are relatively close to the properties of symmetric spaces (see \cite{bt}, \cite{kow}). Therefore, it is natural to extend results from \cite{ok} onto this class. 

Our methods are based on the Lie group theory. We refer to \cite{kn}, \cite{h} and \cite{ov} and use facts from these sources without further explanations. Our notation and terminology is close to \cite{ov}.

\noindent {\bf Acknowledgment.} We would like to especially thank the anonymous referee for very useful suggestions and indicating many inaccuracies. We would like to also thank him/her for pointing out, that one of the assumptions in Theorem \ref{twg} can be dropped. The authors thank Andrei Rapinchuk and Dave Witte-Morris for answering their questions. The second author acknowledges partial support of the ESF Research Network "Contact and Symplectic Topology" (CAST). 

\section{a-hyperbolic rank}

By the a-hyperbolic rank we will understand a dimension of a specific convex cone defined by the action of the Weyl group of a semisimple Lie group $G.$ 

Let $V$ be a real vector space of dimension $n.$ Choose a set of linearly independent vectors $B \subset V.$
A convex cone $A^{+}$ is a subset of $V$ generated by all linear combinations with non-negative coefficients $A^{+}:=Span^{+}(B)$.
The cardinality of $B$ is the dimension of the convex cone $A^{+}.$
In the sequel we will use the simple observation that for any  linear automorphism $f:V \rightarrow V,$ such that $f(A^{+})=A^{+},$  the set of fixed points of $f$ in $A^{+}$ is a convex cone.

\begin{lemma}
Let $V_{1},...,V_{n}$ be a collection of vector subspaces of $V$ and let $A^{+}$ be a convex cone. Assume that
$$A^{+} \subset \bigcup_{k=1}^{n} V_{k}.$$
Then there exists a number $k,$ such that $A^{+} \subset V_{k}.$ 
\label{lin}
\end{lemma}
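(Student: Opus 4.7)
The plan is to reduce to the well-known fact that a real vector space is not a finite union of proper subspaces, and to exploit the fact that the cone $A^{+}$ has nonempty interior in its own linear span.

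More precisely, let $W := \operatorname{Span}(B) \subset V$. For each $k$ set $U_{k} := W \cap V_{k}$, which is a linear subspace of $W$. Since $A^{+} \subset W$, the hypothesis $A^{+} \subset \bigcup_{k} V_{k}$ becomes $A^{+} \subset \bigcup_{k} U_{k}$. Suppose, for contradiction, that $A^{+} \not\subset V_{k}$ for every $k$; then each $U_{k}$ is a \emph{proper} subspace of $W$, because otherwise $W \subset V_{k}$ would force $A^{+} \subset V_{k}$.

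The next step is to observe that $A^{+}$ has nonempty interior in $W$: indeed, since $B$ is linearly independent, $B$ is a basis of $W$, and the set of vectors all of whose $B$-coordinates are strictly positive is a nonempty open subset of $W$ entirely contained in $A^{+}$. Pick a point $x_{0}$ in this interior and a small open ball $U \subset W$ around it with $U \subset A^{+}$. Then $U \subset \bigcup_{k} U_{k}$.

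Now the main (and only) technical point is the classical lemma that a real vector space is not contained in the union of finitely many of its proper linear subspaces; equivalently, such a union has empty interior. One clean way is to argue by induction on $n$: given proper subspaces $U_{1},\ldots,U_{n}$ of $W$, choose $v \in W \setminus U_{n}$ and $w \in W \setminus (U_{1}\cup\cdots\cup U_{n-1})$ (the latter existing by the induction hypothesis); then among the infinitely many vectors $w + t v$ with $t \in \mathbb{R}$, at most one lies in $U_{n}$ and, for each $j < n$, at most one lies in $U_{j}$, so some $w + tv$ avoids all $U_{k}$. Applying this to the open ball $U$ (which spans $W$) yields a point of $U$ outside $\bigcup_{k} U_{k}$, contradicting $U \subset \bigcup_{k} U_{k}$. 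Hence some $U_{k}$ equals $W$, so $A^{+} \subset V_{k}$, as required. I expect no real obstacle; the only subtlety is remembering to work inside the span $W$ of $B$, rather than in $V$, before invoking the finite-union-of-subspaces fact.
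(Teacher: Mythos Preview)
Your argument is correct. Note, however, that the paper itself gives no proof of this lemma: it is stated as an elementary fact and then invoked later in the proof of the main theorem, so there is no ``paper's own proof'' to compare against.

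One minor expository point: the phrase ``applying this to the open ball $U$'' is slightly loose, since your inductive argument literally produces a point of $W$, not of $U$, outside $\bigcup_{k}U_{k}$. The fix is immediate --- the complement of each proper closed subspace $U_{k}$ is open and dense in $W$, so the complement of the finite union is dense and therefore meets $U$ --- but it deserves one explicit sentence. Alternatively, you can bypass the interior/open-ball step entirely and run the same line-counting argument directly inside the convex set $A^{+}$: if $A^{+}\not\subset V_{j}$ for every $j$, pick $a\in A^{+}\setminus(V_{1}\cup\cdots\cup V_{n-1})$ (which exists unless induction on $n$ already finishes the job) and $b\in A^{+}\setminus V_{n}$; the segment $\{(1-t)a+tb:t\in[0,1]\}$ lies in $A^{+}$ by convexity, and at most one value of $t$ lands in each $V_{j}$, contradicting $A^{+}\subset\bigcup_{k}V_{k}$. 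This uses only convexity and avoids any topology.
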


\subsection{Antipodal hyperbolic orbits}

In this section we are interested in  antipodal hyperbolic orbits in  absolutely simple Lie algebras. Let $G$ be a real, connected and absolutely simple linear Lie group with a Lie algebra $\mathfrak{g}.$ We say that an element $X \in \mathfrak{g}$ is hyperbolic, if $X$ is semisimple (that is, $ad_{X}$ is diagonalizable) and all eigenvalues of $ad_{X}$ are real.

\begin{definition}
{\rm An adjoint orbit $O_{X}:=Ad(G)(X)$ is said to be hyperbolic if $X$ (and therefore every element of $O_{X}$) is hyperbolic. An orbit $O_{Y}$ is antipodal if $-Y\in O_{Y}$ (and therefore for every $Z\in O_{Y},$ $-Z\in O_{Y}$).} 
\end{definition} 

We begin with a brief description of an effective way of classifying antipodal hyperbolic orbits in $\mathfrak{g}^{\mathbb{C}}$ and in $\mathfrak{g}.$ For a more detailed treatment of this subject please refer to \cite{ok}.
\noindent
Fix a Cartan subalgebra $\mathfrak{j}^{\mathbb{C}}$ of $\mathfrak{g}^{\mathbb{C}}.$ Let $\Delta=\Delta (\mathfrak{g}^{\mathbb{C}},\mathfrak{j}^{\mathbb{C}}),$ be the root system of $\mathfrak{g}^{\mathbb{C}}$ with respect to $\mathfrak{j}^{\mathbb{C}}$. Consider the subalgebra
$$\mathfrak{j}:=\{  X\in \mathfrak{j}^{\mathbb{C}} \ | \forall_{\alpha \in \Delta}, \ \alpha (X)\in \mathbb{R} \},$$
which is a real form of $\mathfrak{j}^{\mathbb{C}}.$ Choose a subsystem $\Delta^{+}$ of positive roots in $\Delta.$ Then
$$\mathfrak{j}^{+}:=\{  X\in \mathfrak{j} \ | \forall_{\alpha \in \Delta^{+}}, \ \alpha (X)\geq 0 \}$$
is the closed Weyl chamber for the Weyl group $W_{\mathfrak{g}^{\mathbb{C}}}$ of $\Delta .$

\begin{lemma}[Fact 6.1 in \cite{ok}]
Every hyperbolic orbit in $\mathfrak{g}^{\mathbb{C}}$ meets $\mathfrak{j}$ in a single $W_{\mathfrak{g}^{\mathbb{C}}}$-orbit. In particular there is a bijective correspondence between hyperbolic orbits $O_{X}$ and elements $X$ of $\mathfrak{j}^{+}.$
\label{l1}
\end{lemma}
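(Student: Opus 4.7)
The plan is to reduce the claim to three standard structure-theoretic facts about complex semisimple Lie algebras and their Weyl groups: (a) every semisimple element of $\mathfrak{g}^{\mathbb{C}}$ lies in some Cartan subalgebra, and any two Cartan subalgebras are $\mathrm{Ad}(G^{\mathbb{C}})$-conjugate; (b) two elements of a fixed Cartan subalgebra $\mathfrak{j}^{\mathbb{C}}$ are in the same $\mathrm{Ad}(G^{\mathbb{C}})$-orbit if and only if they are in the same $W_{\mathfrak{g}^{\mathbb{C}}}$-orbit; (c) the closed chamber $\mathfrak{j}^{+}$ is a fundamental domain for $W_{\mathfrak{g}^{\mathbb{C}}}$ acting on $\mathfrak{j}$. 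Granting these, the lemma falls out by careful tracing of the reality condition that cuts $\mathfrak{j}$ out of $\mathfrak{j}^{\mathbb{C}}$.

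To show that a hyperbolic orbit $O_{X}$ meets $\mathfrak{j}$, I would use (a) to find $g\in G^{\mathbb{C}}$ with $Y:=\mathrm{Ad}(g)(X)\in\mathfrak{j}^{\mathbb{C}}$. The eigenvalues of $\mathrm{ad}_{Y}$ on $\mathfrak{g}^{\mathbb{C}}$ are precisely the numbers $\alpha(Y)$ for $\alpha\in\Delta$ (plus zero on $\mathfrak{j}^{\mathbb{C}}$ itself). Hyperbolicity of $X$ is preserved under the adjoint action, so every $\alpha(Y)$ is real, i.e. $Y\in\mathfrak{j}$ by definition. For uniqueness, if $X,X'\in\mathfrak{j}$ satisfy $X'=\mathrm{Ad}(g)(X)$ for some $g\in G^{\mathbb{C}}$, then (b) yields $X'\in W_{\mathfrak{g}^{\mathbb{C}}}\cdot X$; since $W_{\mathfrak{g}^{\mathbb{C}}}$ permutes the roots $\Delta$, it preserves the reality condition defining $\mathfrak{j}$, so the whole Weyl orbit remains inside $\mathfrak{j}$. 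The parametrization of hyperbolic orbits by $\mathfrak{j}^{+}$ then follows immediately by combining the above with (c).

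The main technical obstacle is (b), the classical but nontrivial fact that $\mathrm{Ad}(G^{\mathbb{C}})$-conjugation on a single Cartan subalgebra collapses to the Weyl group action. The cleanest way to justify it is to observe that $\mathfrak{j}^{\mathbb{C}}$ and $\mathrm{Ad}(g^{-1})(\mathfrak{j}^{\mathbb{C}})$ are both Cartan subalgebras of the reductive centralizer $\mathfrak{z}_{\mathfrak{g}^{\mathbb{C}}}(X)$ and hence are conjugate by some $\mathrm{Ad}(\ell)$ with $\ell\in Z_{G^{\mathbb{C}}}(X)$; then $g\ell\in N_{G^{\mathbb{C}}}(\mathfrak{j}^{\mathbb{C}})$ represents an element of the Weyl group sending $X$ to $X'$. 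Everything else in the argument is formal bookkeeping against the definitions of $\mathfrak{j}$ and $\mathfrak{j}^{+}$, and the lemma can simply be quoted from \cite{ok} in the final write-up.
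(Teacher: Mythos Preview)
Your outline is correct and is the standard argument for this classical fact. However, the paper does not give its own proof of this lemma at all: it is stated as ``Fact 6.1 in \cite{ok}'' and simply imported from Okuda's paper, so there is nothing in the paper to compare your argument against. Your own closing remark already anticipates this---in the final write-up the lemma should just be quoted from \cite{ok}.
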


\noindent
Let $\Pi$ be a simple root system for $\Delta^{+}.$ For every $X \in \mathfrak{j}$ we define
$$\Psi_{X}:\Pi \rightarrow \mathbb{R}, \ \alpha \rightarrow \alpha (X).$$
The above map is called the {\it weighted Dynkin diagram} of $X\in \mathfrak{j},$ and the value $\alpha (X)$ is the weight of the node $\alpha.$ Since $\Pi$ is a base of the dual space $\mathfrak{j}^{\ast}$, the map
$$\Psi : \mathfrak{j} \rightarrow \mathop{\mathrm{Map}}\nolimits (\Pi , \mathbb{R}), \ X \rightarrow \Psi_{X}$$
is an linear isomorphism. We see, that:
$$\Psi |_{\mathfrak{j}^{+}} : \mathfrak{j}^{+} \rightarrow \mathop{\mathrm{Map}}\nolimits(\Pi , \mathbb{R}_{\geq 0}), \ X \rightarrow \Psi_{X}$$
is bijective. 
\noindent
We also need a tool, that will enable us to distinguish antipodal hyperbolic orbits. Let $w_{0}$ be the longest element of $W_{\mathfrak{g}^{\mathbb{C}}}.$ The action of $w_{0}$ sends $\mathfrak{j}^{+}$ to $- \mathfrak{j}^{+}.$ Define:
$$-w_{0}: \mathfrak{j} \rightarrow \mathfrak{j}, \ X \mapsto -(w_{0}X).$$
This is an involutive automorphism of $\mathfrak{j},$ which preserves $\mathfrak{j}^{+}.$ Then $\Psi$ and $-w_{0}$ induce the linear automorphism $\iota = \Psi \circ (-w_{0}) \circ \Psi^{-1}$ of $\mathop{\mathrm{Map}}\nolimits(\Pi, \mathbb{R}).$

\begin{theorem}[Theorem 6.3 in \cite{ok}] Hyperbolic orbit $O$ in $\mathfrak{g}^{\mathbb{C}}$ is antipodal if and only if the weighted Dynkin diagram of $O$ is  invariant with respect to $\iota.$
\label{tw2}
\end{theorem}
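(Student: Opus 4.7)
The plan is to reduce the statement directly to a Weyl-group fixed-point condition on $\mathfrak{j}^{+}$ using Lemma~\ref{l1}, and then transport that condition across the isomorphism $\Psi$.

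First, by Lemma~\ref{l1} the hyperbolic orbit $O$ corresponds to a unique $X\in\mathfrak{j}^{+}$, so $O=O_{X}$. The orbit is antipodal precisely when $-X\in O_{X}$. Since $-X$ already lies in $\mathfrak{j}$, Lemma~\ref{l1} tells us that $-X\in O_{X}$ if and only if $-X$ and $X$ lie in the same $W_{\mathfrak{g}^{\mathbb{C}}}$-orbit. So the problem reduces to characterizing when $-X$ and $X$ are $W_{\mathfrak{g}^{\mathbb{C}}}$-conjugate.

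Next I would locate the unique $\mathfrak{j}^{+}$-representative of the $W_{\mathfrak{g}^{\mathbb{C}}}$-orbit of $-X$. Because $w_{0}$ is the longest element of the Weyl group, it sends $\mathfrak{j}^{+}$ onto $-\mathfrak{j}^{+}$; equivalently $-w_{0}$ stabilizes $\mathfrak{j}^{+}$. Since $w_{0}\in W_{\mathfrak{g}^{\mathbb{C}}}$, the element $-w_{0}(X)$ lies in the $W_{\mathfrak{g}^{\mathbb{C}}}$-orbit of $-X$ and also in $\mathfrak{j}^{+}$. As $\mathfrak{j}^{+}$ contains exactly one element from each Weyl orbit, $-w_{0}(X)$ is the distinguished representative. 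Therefore $-X$ and $X$ are $W_{\mathfrak{g}^{\mathbb{C}}}$-conjugate if and only if $X=-w_{0}(X)$, i.e.\ $X$ is a fixed point of the involution $-w_{0}$ of $\mathfrak{j}^{+}$.

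Finally I would transfer this condition to the space of weighted Dynkin diagrams. By definition $\iota=\Psi\circ(-w_{0})\circ\Psi^{-1}$, and $\Psi$ is a linear isomorphism, so $X=-w_{0}(X)$ holds in $\mathfrak{j}$ if and only if $\Psi_{X}=\iota(\Psi_{X})$ in $\mathop{\mathrm{Map}}\nolimits(\Pi,\mathbb{R})$. Since $\Psi_{X}$ is by construction the weighted Dynkin diagram of $O=O_{X}$, this is exactly the claim of the theorem.

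The conceptual obstacle is small: the only nontrivial input beyond Lemma~\ref{l1} is recognizing that the correct way to bring $-X$ back into $\mathfrak{j}^{+}$ is to apply $-w_{0}$, which rests on the characterization of $w_{0}$ as the unique Weyl-group element sending $\mathfrak{j}^{+}$ to $-\mathfrak{j}^{+}$. Once that is in place, all three steps are essentially bookkeeping along the bijections of Lemma~\ref{l1} and the definition of $\iota$.
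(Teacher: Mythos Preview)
Your argument is correct. Note, however, that the present paper does not itself prove this statement: it is quoted verbatim as Theorem~6.3 of \cite{ok} and used as an external input, so there is no in-paper proof to compare against. That said, the route you take --- using Lemma~\ref{l1} to replace ``$-X\in O_{X}$'' by ``$-X$ and $X$ lie in the same $W_{\mathfrak{g}^{\mathbb{C}}}$-orbit'', then observing that $-w_{0}(X)$ is the unique $\mathfrak{j}^{+}$-representative of the orbit of $-X$, and finally conjugating by $\Psi$ to pass to $\iota$ --- is exactly the natural (and essentially the only) way to prove this, and it matches the argument in \cite{ok}.
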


\noindent
The involutive automorphism $\iota$ is non-trivial only for $\mathfrak{g}^{\mathbb{C}}$ of type $A_{n}, \ D_{2k+1}, \ E_{6},$ for $n,k \geq 2.$ In such cases the form of $\iota$ is described in Theorem 6.3 (ii) in \cite{ok}.

\noindent
Now we should investigate which hyperbolic orbits in $\mathfrak{g}^{\mathbb{C}}$ meet $\mathfrak{g}.$ Choose a Cartan involution $\theta$ of $\mathfrak{g},$ and the corresponding Cartan decomposition 
$$\mathfrak{g}=\mathfrak{k} + \mathfrak{p}.$$
There exists a Cartan subalgebra $\mathfrak{j}_{\mathfrak{g}}=\mathfrak{t} + \mathfrak{a}$ of $\mathfrak{g}$ for which $\mathfrak{a}$ is a maximal abelian subalgebra of $\mathfrak{p},$ and $\mathfrak{t}$ is in the centralizer $Z_{\mathfrak{k}}(\mathfrak{a})$ (such subalgebra is called split). Notice that
$$\mathfrak{j}^{\mathbb{C}}=\mathfrak{j}_{\mathfrak{g}} + i\mathfrak{j}_{\mathfrak{g}}$$
 and 
$$ \mathfrak{j}=i\mathfrak{t} + \mathfrak{a}.$$
Consider
$$\Sigma := \{  \alpha |_{\mathfrak{a}} \ | \ \alpha \in \Delta \} \setminus \{ 0 \} \subset \mathfrak{a}^*,$$
the restricted root system of $\mathfrak{g}$ with respect to $\mathfrak{a}.$  The set of positive roots has the form
$$\Sigma^{+} := \{  \alpha |_{\mathfrak{a}} \ | \ \alpha \in \Delta^{+} \} \setminus \{ 0 \}.$$
Define
$$\mathfrak{a}^{+} := \{  X\in \mathfrak{a} \ | \forall_{\xi \in \Sigma^{+}}, \ \xi (X) \geq 0 \}.$$
We have $\mathfrak{a}^{+} = \mathfrak{j}^{+} \cap \mathfrak{a}.$ 

\begin{theorem}[Lemma 7.2 and Proposition 4.5 in \cite{ok}] Hyperbolic orbit $O_{X}=Ad(G^{\mathbb{C}})(X)$ in $\mathfrak{g}^{\mathbb{C}}$ meets $\mathfrak{g}$ if and only if the corresponding, unique element $X\in \mathfrak{j}^{+}$ is also in $\mathfrak{a}^{+}.$ Also
$$Ad(G)(X)=O_{X} \cap \mathfrak{g},$$
and every hyperbolic orbit in $\mathfrak{g}$ is obtained this way.
\label{tw3}
\end{theorem}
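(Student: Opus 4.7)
The plan is to leverage two classical facts about real semisimple Lie algebras: (i) every element of $\mathfrak{p}$ is hyperbolic (because $\mathrm{ad}_X$ for $X\in\mathfrak{p}$ is symmetric with respect to the inner product $-B(\cdot,\theta\cdot)$ obtained from the Killing form, hence diagonalizable with real eigenvalues), and (ii) conversely, every hyperbolic element $Y\in\mathfrak{g}$ is $\mathrm{Ad}(G)$-conjugate to an element of $\mathfrak{p}$, and the $K$-action on $\mathfrak{p}$ takes each element to a unique point of $\mathfrak{a}^{+}$. Granted these, one has for free that hyperbolic $\mathrm{Ad}(G)$-orbits in $\mathfrak{g}$ are in bijection with elements of $\mathfrak{a}^{+}$. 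The remaining task is to match this bijection with the one provided by Lemma~\ref{l1}.

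For the forward direction, assume $O_X\cap\mathfrak{g}\neq\emptyset$ and pick $Y\in O_X\cap\mathfrak{g}$. Since $Y$ is $\mathrm{Ad}(G^{\mathbb{C}})$-conjugate to a hyperbolic element, it is itself hyperbolic in $\mathfrak{g}$; by fact (ii) there exists $Z\in\mathfrak{a}^{+}$ with $Z\in\mathrm{Ad}(G)(Y)\subset O_X$. Now $\mathfrak{a}^{+}=\mathfrak{j}^{+}\cap\mathfrak{a}$, so $Z\in\mathfrak{j}^{+}\cap O_X$. Lemma~\ref{l1} asserts that $O_X\cap\mathfrak{j}$ is a single $W_{\mathfrak{g}^{\mathbb{C}}}$-orbit, so the element of $\mathfrak{j}^{+}$ lying in $O_X$ is unique; hence $X=Z\in\mathfrak{a}^{+}$. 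The reverse direction is trivial, since $X\in\mathfrak{a}^{+}\subset\mathfrak{a}\subset\mathfrak{g}$ means $X\in O_X\cap\mathfrak{g}$ automatically.

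For the identification $\mathrm{Ad}(G)(X)=O_X\cap\mathfrak{g}$, the inclusion $\subset$ is obvious. For $\supset$, take $Y\in O_X\cap\mathfrak{g}$; as above $Y$ is hyperbolic in $\mathfrak{g}$, hence $\mathrm{Ad}(G)$-conjugate to a unique $Z\in\mathfrak{a}^{+}$, and the uniqueness argument of the previous paragraph forces $Z=X$. Finally, given any hyperbolic orbit $O'=\mathrm{Ad}(G)(Y)$ in $\mathfrak{g}$, fact (ii) supplies $X\in\mathfrak{a}^{+}$ with $Y\in\mathrm{Ad}(G)(X)$, so $O'=\mathrm{Ad}(G)(X)=O_X\cap\mathfrak{g}$, showing that every hyperbolic orbit in $\mathfrak{g}$ arises this way.

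The main obstacle is really fact (ii): one needs to know that a hyperbolic element of $\mathfrak{g}$ can be conjugated into $\mathfrak{p}$, and then into $\mathfrak{a}^{+}$. The first step follows from the real Jordan decomposition combined with the observation that any abelian subalgebra consisting of $\mathrm{ad}$-diagonalizable-over-$\mathbb{R}$ elements lies, up to conjugation, in $\mathfrak{p}$ (this is essentially the statement that all maximal $\mathbb{R}$-split tori of $\mathfrak{g}$ are $\mathrm{Ad}(G)$-conjugate). The second step is the standard fact that $\mathfrak{p}=\bigcup_{k\in K}\mathrm{Ad}(k)\mathfrak{a}^{+}$ with uniqueness of the $\mathfrak{a}^{+}$-representative. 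Both are classical and can be invoked from \cite{kn} or \cite{h} without elaboration.
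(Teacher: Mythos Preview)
The paper does not give its own proof of this statement: Theorem~\ref{tw3} is stated as a citation of Lemma~7.2 and Proposition~4.5 in \cite{ok}, with no argument supplied. There is therefore nothing in the present paper to compare your proof against.

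That said, your argument is correct and self-contained. The key ingredients you invoke---that every hyperbolic element of $\mathfrak{g}$ is $\mathrm{Ad}(G)$-conjugate into $\mathfrak{a}$ (via conjugacy of maximal $\mathbb{R}$-split toral subalgebras), and that $\mathfrak{a}^{+}$ is a fundamental domain for the $\mathrm{Ad}(K)$-action on $\mathfrak{p}$ restricted to $\mathfrak{a}$-conjugates---are standard and available in the references you cite. The only place worth a word of care is your implicit claim that $Y\in O_X\cap\mathfrak{g}$ is hyperbolic \emph{in $\mathfrak{g}$}: conjugation by $G^{\mathbb{C}}$ shows $\mathrm{ad}_Y$ on $\mathfrak{g}^{\mathbb{C}}$ is diagonalizable with real eigenvalues, and one then uses the elementary linear-algebra fact that a real operator whose complexification is $\mathbb{R}$-diagonalizable is itself $\mathbb{R}$-diagonalizable. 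With that remark made explicit, the proof is complete. This is essentially the line of reasoning in Okuda's paper as well, so your write-up would serve perfectly well as an in-paper proof were one desired.
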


\noindent
We see that $\iota (\mathfrak{a}^{+})= \mathfrak{a}^{+}$ therefore we can define the 
convex cone

 $$\mathfrak{b}^{+} \subset \mathfrak{a}^{+}$$ 
as the set of all fixed points of $\iota $ in $\mathfrak{a}^{+}.$ 

\noindent
For a semisimple $\mathfrak{g}$ we will take $\mathfrak{b}^{+}$ to be the convex cone spanned by vectors generating convex cones $\mathfrak{b}_{s}^{+}$, defined for every  simple component $\mathfrak{s}$ of $\mathfrak{g}$ (we can perform such construction, since the Cartan involution $\theta$ never maps a nonzero vector  $X$ to a vector orthogonal to $X$ with respect to the  Killing form of the semisimple algebra $\mathfrak{g}$). If $\mathfrak{g}$ is reductive we take $\mathfrak{b}^{+}$ obtained by the described procedure applied to $[\mathfrak{g},\mathfrak{g}]$.

\begin{definition}
{\rm The dimension of $\mathfrak{a}^{+}$ is called the real rank ($\text{rank}_{\mathbb{R}}\mathfrak{g}$) of $\mathfrak{g} .$ The dimension of $\mathfrak{b}^{+}$ is called the a-hyperbolic rank of $\mathfrak{g}$ and is denoted by $\text{rank}_{a-hyp}\mathfrak{g}$.}
\end{definition}

\noindent
By Lemma \ref{l1}, Theorem \ref{tw2} and Theorem \ref{tw3} we have:

\begin{lemma}
There is a bijective correspondence between antipodal hyperbolic orbits $O_{X}$ in $\mathfrak{g}$ and elements $X \in \mathfrak{b}^{+}.$ One also has
$$O_{X}=Ad(G)(X).$$
\label{lma}
\end{lemma}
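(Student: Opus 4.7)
The plan is to chain together the three preceding results: Lemma \ref{l1} handles the bijection between hyperbolic orbits in $\mathfrak{g}^{\mathbb{C}}$ and elements of $\mathfrak{j}^+$, Theorem \ref{tw2} detects antipodality at the level of weighted Dynkin diagrams (equivalently, via the $\iota$-action on $\mathfrak{j}$ through $\Psi$), and Theorem \ref{tw3} controls descent to $\mathfrak{g}$ and identifies the real orbit explicitly as $Ad(G)(X)$. I would first dispose of the absolutely simple case, where $\mathfrak{b}^+$ is the fixed-point set of $\iota$ in $\mathfrak{a}^+$, and then reduce the semisimple/reductive situation to this by decomposing $[\mathfrak{g},\mathfrak{g}]$ into absolutely simple ideals.

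For the forward direction, let $O \subset \mathfrak{g}$ be an antipodal hyperbolic orbit. Its complexification $O^{\mathbb{C}} := Ad(G^{\mathbb{C}})(O)$ is hyperbolic, and antipodal in $\mathfrak{g}^{\mathbb{C}}$ (since $-Y \in O \subset O^{\mathbb{C}}$ for every $Y \in O$). By Lemma \ref{l1} there is a unique $X \in \mathfrak{j}^+$ with $X \in O^{\mathbb{C}}$. Because $O^{\mathbb{C}}$ meets $\mathfrak{g}$, Theorem \ref{tw3} yields $X \in \mathfrak{a}^+$ together with $O = Ad(G)(X)$. Antipodality of $O^{\mathbb{C}}$ combined with Theorem \ref{tw2} says that $\Psi_X$ is $\iota$-invariant, which, translated through $\Psi$, means $(-w_0)X = X$; hence $X$ lies in the $\iota$-fixed locus of $\mathfrak{a}^+$, i.e., $X \in \mathfrak{b}^+$.

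For the reverse direction, given $X \in \mathfrak{b}^+$ I run the same arguments backwards. Since $X \in \mathfrak{a}^+ \subset \mathfrak{j}^+$, Theorem \ref{tw3} provides a hyperbolic orbit $O := Ad(G)(X) = Ad(G^{\mathbb{C}})(X) \cap \mathfrak{g}$ in $\mathfrak{g}$; $\iota$-invariance of $\Psi_X$ combined with Theorem \ref{tw2} says that $Ad(G^{\mathbb{C}})(X)$ is antipodal, so its intersection $O$ with $\mathfrak{g}$ contains $-X$ and is therefore antipodal. Injectivity of the assignment $X \mapsto Ad(G)(X)$ follows from the uniqueness part of Lemma \ref{l1}: two elements of $\mathfrak{j}^+$ representing the same $G^{\mathbb{C}}$-orbit must coincide, a fortiori when they already lie in $\mathfrak{a}^+$.

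Finally, for the semisimple/reductive extension, decompose $[\mathfrak{g},\mathfrak{g}] = \bigoplus_i \mathfrak{s}_i$ into absolutely simple ideals. A hyperbolic orbit in $\mathfrak{g}$ splits componentwise as a product of hyperbolic orbits in the $\mathfrak{s}_i$, and is antipodal precisely when each factor is; applying the simple case to every $\mathfrak{s}_i$ and assembling gives the cone $\mathfrak{b}^+$ as the span of the $\mathfrak{b}^+_{\mathfrak{s}_i}$, and the bijection follows. The only genuinely delicate point is the compatibility of the two incarnations of $\iota$ (as an automorphism of $\mathop{\mathrm{Map}}\nolimits(\Pi,\mathbb{R})$ versus $-w_0$ on $\mathfrak{j}$ via $\Psi$), which needs to be spelled out once to justify that $\iota$-invariance of the weighted Dynkin diagram matches the fixed-point condition defining $\mathfrak{b}^+$; beyond this, no substantive obstacle arises, as the lemma is essentially a packaging of Lemma \ref{l1}, Theorem \ref{tw2}, and Theorem \ref{tw3}.
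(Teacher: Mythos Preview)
Your proposal is correct and follows exactly the route the paper indicates: the paper's own ``proof'' is the single clause ``By Lemma~\ref{l1}, Theorem~\ref{tw2} and Theorem~\ref{tw3} we have'', and you have simply unpacked how these three inputs combine, including the passage from the absolutely simple to the semisimple/reductive case and the identification of the two meanings of $\iota$. There is nothing to add beyond noting that your write-up is considerably more explicit than what the authors chose to record.
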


\noindent

In what follows we will use the notion of the Satake diagram. Let us recall this construction.
Choose  the Cartan decomposition as above. Let $\theta:\frak{g}\rightarrow\frak{g}$ be the Cartan involution defined as before. Recall that all our constructions are performed with respect to $\frak{j}$ and $\frak{j}^{\mathbb{C}}$.  Consider  the complex conjugation $\sigma:\frak{g}^{\mathbb{C}}\rightarrow\frak{g}^{\mathbb{C}}.$  As usual we get the root space decomposition

$$\frak{g}^{\mathbb{C}}=\frak{j}^{\mathbb{C}}+\sum_{\alpha\in\Delta}\frak{g}_{\alpha}.$$
Define the involution $\sigma^*$ on $(\frak{j}^{\mathbb{C}})^*$ by the formula

$$(\sigma^*\varphi)(X)=\overline{\varphi(\sigma(X))},$$
for each $\varphi \in (\mathfrak{j}^{\mathbb{C}})^{\ast}$ and $X\in \frak{j}^{\mathbb{C}}$
If $\alpha\in\Delta$, then $\sigma^*\alpha\in\Delta$, and $\sigma\frak{g}_{\alpha}=\frak{g}_{\sigma^*\alpha}$. Put
$\Delta_0=\{\alpha\in\Delta\,|\,\sigma^*\alpha=-\alpha\}.$
One easily checks that 
$$\Delta_0=\{\alpha\in\Delta\,|\,\alpha|_{\frak{a}}=0\}.$$
Put $\Delta_1=\Delta\setminus\Delta_0.$ A direct check-up shows that 
$\sigma^*(\Delta_0)\subset\Delta_0,$ and $\sigma^*(\Delta_1)=\Delta_1.$
Choose an order in $\Delta$ in a way that $\sigma^*(\Delta_1^+)\subset\Delta_1^+.$ 
 Put $\Pi_0=\Pi\cap\Delta_0$ and $\Pi_1=\Pi\cap\Delta_1$. Recall that the {\it Satake diagram} for $\frak{g}$ is defined as follows. One takes the Dynkin diagram for $\frak{g}^{\mathbb{C}}$ and paints vertexes from $\Pi_0$ in black and vertexes from $\Pi_1$ in white. Next, one shows that $\sigma^*$ determines an involution $\tilde\sigma$ on $\Pi_1$ defined by the equation 
$$\sigma^*\alpha-\beta=\sum_{\gamma\in\Pi_0}k_{\gamma}\gamma,\,k_{\gamma}\geq 0.$$
By definition, if the above equality holds for $\alpha$ and $\beta$, then $\tilde\sigma\alpha=\beta$. Now the construction of the Satake diagram is completed by joining by arrows the white vertexes transformed into each other by $\tilde\sigma$. Recall that semisimple real Lie algebras are uniquely determined by their Satake diagrams up to isomorphism. The table of  the Satake diagrams for all real forms of simple complex Lie algebras can be found in \cite{ov}.

In order to calculate the a-hyperbolic rank we need the following definition and theorems (compare Definition 7.3 in \cite{ok})

\begin{definition}
{\rm Let $\Psi_{X} \in \mathop{\mathrm{Map}}\nolimits(\Pi, \mathbb{R})$ be the weighted Dynkin diagram of $\mathfrak{g}^{\mathbb{C}}$ and $S_{\mathfrak{g}}$ be the Satake diagram of $\mathfrak{g}.$ We say that $\Psi_{X}$ matches $S_{\mathfrak{g}}$ if all black nodes in $S_{\mathfrak{g}}$ have weights equal to 0 in $\Psi_{X}$ and every two nodes joined by an arrow have the same weights.}
\label{dfff}
\end{definition}

\begin{theorem}[Theorem 7.4 in \cite{ok}] The weighted Dynkin diagram $\Psi_{X} \in \mathop{\mathrm{Map}}\nolimits(\Pi , \mathbb{R}_{\geq 0})$ of a hyperbolic orbit $O$ in $\mathfrak{g}^{\mathbb{C}}$ matches $S_{\mathfrak{g}}$ if and only if $O$ meets $\mathfrak{g}.$ There is also a bijective correspondence between elements matching $S_{\mathfrak{g}}$ of $\mathop{\mathrm{Map}}\nolimits(\Pi , \mathbb{R}_{\geq 0})$ and the set of hyperbolic orbits meeting $\mathfrak{g}.$
\label{tw4}
\end{theorem}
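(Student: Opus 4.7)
The plan is to reduce the claim to Theorem \ref{tw3}: a hyperbolic orbit $O$ in $\mathfrak{g}^{\mathbb{C}}$ meets $\mathfrak{g}$ precisely when its unique representative $X\in\mathfrak{j}^+$ lies in $\mathfrak{a}^+=\mathfrak{j}^+\cap\mathfrak{a}$. Since the condition of matching $S_{\mathfrak{g}}$ is a property of the values $\alpha(X)$ for $\alpha\in\Pi$, and since $\Psi|_{\mathfrak{j}^+}:\mathfrak{j}^+\to \mathop{\mathrm{Map}}\nolimits(\Pi,\mathbb{R}_{\geq 0})$ is a bijection, the whole theorem boils down to the equivalence
\[
X\in\mathfrak{a}\ \Longleftrightarrow\ \Psi_X\text{ matches }S_{\mathfrak{g}}
\]
for $X\in\mathfrak{j}^+$.

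First I would characterize membership in $\mathfrak{a}$ analytically. Since $\mathfrak{j}=i\mathfrak{t}+\mathfrak{a}$ and $\sigma$ fixes $\mathfrak{a}$ pointwise while acting as $-\mathrm{id}$ on $i\mathfrak{t}$, for $X\in\mathfrak{j}$ we have $X\in\mathfrak{a}$ iff $\sigma(X)=X$. Moreover $\sigma$ preserves $\mathfrak{j}$, and $\alpha(\sigma X)$ is real for every $\alpha\in\Delta$ because $\sigma X\in\mathfrak{j}$; hence by the definition of $\sigma^*$ the condition $\sigma X=X$ is equivalent to $(\sigma^*\alpha)(X)=\alpha(X)$ for every $\alpha\in\Delta$. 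As $\Pi$ is a basis of $\mathfrak{j}^*$, it suffices to verify this identity for $\alpha\in\Pi$.

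The next step is to translate these identities into the combinatorial conditions from Definition \ref{dfff}. For $\alpha\in\Pi_0$ one has $\sigma^*\alpha=-\alpha$, so $(\sigma^*\alpha)(X)=\alpha(X)$ reduces to $\alpha(X)=0$, which is the condition that black nodes of $S_{\mathfrak{g}}$ carry weight zero. For $\alpha\in\Pi_1$ the defining equation of $\tilde\sigma$ yields $\sigma^*\alpha=\tilde\sigma\alpha+\sum_{\gamma\in\Pi_0}k_\gamma\gamma$ with $k_\gamma\geq 0$; once the vanishing on $\Pi_0$ is used, the identity $(\sigma^*\alpha)(X)=\alpha(X)$ simplifies to $(\tilde\sigma\alpha)(X)=\alpha(X)$, i.e.\ white nodes joined by an arrow carry equal weights. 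The converse implication is read off the same calculation: assuming both matching conditions, the two displayed formulas give $(\sigma^*\alpha)(X)=\alpha(X)$ for every $\alpha\in\Pi$, hence $\sigma X=X$, hence $X\in\mathfrak{a}$.

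The bijective correspondence in the second assertion then follows automatically: $\Psi|_{\mathfrak{j}^+}$ identifies $\mathfrak{a}^+$ with the subset of matching elements in $\mathop{\mathrm{Map}}\nolimits(\Pi,\mathbb{R}_{\geq 0})$, while by Lemma \ref{l1} together with Theorem \ref{tw3} the set $\mathfrak{a}^+$ parametrizes exactly the hyperbolic orbits in $\mathfrak{g}^{\mathbb{C}}$ that meet $\mathfrak{g}$. I expect the most delicate point to be justifying the expansion $\sigma^*\alpha=\tilde\sigma\alpha+\sum_{\gamma\in\Pi_0}k_\gamma\gamma$ on $\Pi_1$; one must argue that this is the correct description of the action of $\sigma^*$ on simple roots, so that the arrow rule of the Satake diagram genuinely captures $\sigma$-invariance of $X$ once the black-node conditions have been imposed.
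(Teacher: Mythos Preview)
The paper does not prove this theorem; it is quoted verbatim from Okuda \cite{ok} and used as a black box. So there is no ``paper's own proof'' to compare against.

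That said, your argument is correct and is essentially the standard proof. The key reduction to $X\in\mathfrak{a}\Leftrightarrow\sigma X=X$ on $\mathfrak{j}=i\mathfrak{t}\oplus\mathfrak{a}$ is valid because $\sigma$ fixes $\mathfrak{g}\supset\mathfrak{a}$ pointwise and sends $iT$ to $-iT$ for $T\in\mathfrak{t}$. Your translation of $\sigma X=X$ into $(\sigma^*\alpha)(X)=\alpha(X)$ on $\Pi$ is justified since $\alpha$ is real on $\mathfrak{j}$ and $\Pi$ spans $\mathfrak{j}^*$. The splitting into $\Pi_0$ (giving the black-node condition $\alpha(X)=0$) and $\Pi_1$ (giving, after imposing the black-node condition, the arrow condition $(\tilde\sigma\alpha)(X)=\alpha(X)$) is exactly how the Satake diagram encodes $\sigma$-invariance. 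The point you flag as delicate, namely that $\sigma^*\alpha=\tilde\sigma\alpha+\sum_{\gamma\in\Pi_0}k_\gamma\gamma$ for $\alpha\in\Pi_1$, is precisely the defining equation of $\tilde\sigma$ recalled in the paper just above Definition~\ref{dfff}, so no extra work is needed there. The bijection statement then follows from Lemma~\ref{l1} and Theorem~\ref{tw3} as you indicate.
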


\begin{theorem}[Theorem 7.5 in \cite{ok}] The map $\Psi : \mathfrak{j} \rightarrow \mathop{\mathrm{Map}}\nolimits(\Pi , \mathbb{R})$ induces the linear isomorphism:
$$\mathfrak{a} \rightarrow \{  \Psi_{X} \ matches \ S_{\mathfrak{g}} \}, \ \ X \mapsto \Psi_{X}$$ 
\end{theorem}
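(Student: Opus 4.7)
The plan is to exploit that $\Psi\colon\mathfrak{j}\to\mathop{\mathrm{Map}}\nolimits(\Pi,\mathbb{R})$ has already been shown to be a linear isomorphism, so only the image statement $\Psi(\mathfrak{a})=\{\Psi_X\text{ matches }S_{\mathfrak{g}}\}$ needs a proof. Writing $X=iT+A\in\mathfrak{j}$ with $T\in\mathfrak{t}$ and $A\in\mathfrak{a}$, the complex conjugation $\sigma$ of $\mathfrak{g}^{\mathbb{C}}$ relative to $\mathfrak{g}$ acts by $\sigma(iT+A)=-iT+A$, so membership $X\in\mathfrak{a}$ is equivalent to the fixed-point condition $\sigma(X)=X$.

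The next step translates $\sigma(X)=X$ into the language of simple roots. For $X\in\mathfrak{j}$ both $\alpha(X)$ and $\alpha(\sigma(X))$ are real (the former by the definition of $\mathfrak{j}$, the latter because $\sigma(X)\in\mathfrak{j}$ as well), so the defining formula $(\sigma^{\ast}\alpha)(X)=\overline{\alpha(\sigma(X))}$ simplifies to $(\sigma^{\ast}\alpha)(X)=\alpha(\sigma(X))$. Hence $\sigma(X)=X$ is equivalent to $(\sigma^{\ast}\alpha)(X)=\alpha(X)$ for every $\alpha\in\Delta$, and since $\Pi$ is a basis of $(\mathfrak{j}^{\mathbb{C}})^{\ast}$ it suffices to verify this equality on $\alpha\in\Pi$.

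The final step is to check that this equality on $\Pi$ is exactly the matching condition of Definition \ref{dfff}. For $\alpha\in\Pi_0$ one has $\sigma^{\ast}\alpha=-\alpha$, so $(\sigma^{\ast}\alpha)(X)=\alpha(X)$ becomes $\alpha(X)=0$, i.e.\ the black-node condition. For $\alpha\in\Pi_1$ the defining identity $\sigma^{\ast}\alpha=\tilde\sigma\alpha+\sum_{\gamma\in\Pi_0}k_{\gamma}\gamma$ combined with $\gamma(X)=0$ on $\Pi_0$ reduces $(\sigma^{\ast}\alpha)(X)=\alpha(X)$ to $(\tilde\sigma\alpha)(X)=\alpha(X)$, i.e.\ the arrow condition. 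The reverse implication is the same computation read backwards: matching forces $(\sigma^{\ast}\alpha)(X)=\alpha(X)$ on $\Pi$, hence on all of $\Delta$, hence $\sigma(X)=X$ and $X\in\mathfrak{a}$. This identifies $\Psi(\mathfrak{a})$ with the set of matching diagrams, and together with the already established linearity of $\Psi$ yields the claimed linear isomorphism.

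I do not anticipate any really hard step. The argument is bookkeeping around two mild points: (a) the reality of $\alpha(X)$ and $\alpha(\sigma(X))$ for $X\in\mathfrak{j}$, which removes the complex conjugate from the definition of $\sigma^{\ast}$ and lets one identify $\sigma^{\ast}\alpha$ with $\alpha\circ\sigma$; and (b) the order in which the two matching conditions are used in the $\Pi_1$ case, where one must first invoke vanishing on $\Pi_0$ to discard the $\sum k_{\gamma}\gamma$ term. Both points are routine once the setup is pinned down.
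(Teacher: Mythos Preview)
The paper does not supply a proof of this statement: it is quoted verbatim as Theorem~7.5 from Okuda~\cite{ok} and used as an input to the procedure for computing the a-hyperbolic rank. There is therefore nothing in the present paper to compare your argument against.

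That said, your argument is correct and is the standard way to establish this result. The only place that deserves a second look is the passage from ``$(\sigma^{\ast}\alpha)(X)=\alpha(X)$ for all $\alpha\in\Pi$'' to ``$\sigma(X)=X$''. You justify it by saying $\Pi$ is a basis of $(\mathfrak{j}^{\mathbb{C}})^{\ast}$; strictly speaking you need that $\Pi$ separates points of $\mathfrak{j}$ and that both $X$ and $\sigma(X)$ lie in $\mathfrak{j}$, which you already observed. With that, $\alpha(\sigma(X))=\alpha(X)$ for all $\alpha\in\Pi$ indeed forces $\sigma(X)=X$. The handling of $\Pi_0$ and $\Pi_1$ via $\sigma^{\ast}\alpha=-\alpha$ and $\sigma^{\ast}\alpha=\tilde\sigma\alpha+\sum_{\gamma\in\Pi_0}k_{\gamma}\gamma$ respectively is exactly right, and your remark~(b) about needing the $\Pi_0$-vanishing before invoking the arrow relation is the one genuine order-of-operations issue in the argument.
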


The procedure of  calculating the a-hyperbolic rank is a straightforward consequence of the cited results and looks as follows.

\noindent \textbf{Step 1.} We calculate the a-hyperbolic rank separately for every simple part of $\mathfrak{g}$ and add results. If $\mathfrak{g}$ is reductive, than to get the a-hyperbolic rank of $\mathfrak{g}$ we take the a-hyperbolic rank of $[\mathfrak{g} ,\mathfrak{g} ].$

\noindent \textbf{Step 2.} We calculate the a-hyperbolic rank for simple $\mathfrak{g}$ ($\text{rank} \mathfrak{g}=n$) by taking the weighted Dynkin diagrams of hyperbolic orbits in $\mathfrak{g}^{\mathbb{C}}$ matching $S_{\mathfrak{g}}$ and preserved by $\iota .$ We interpret weights of a given weighted Dynkin diagram as coordinates of a vector in $\mathbb{R}^{n}.$ All vectors constructed this way give us the convex cone which has dimension equal to $\text{rank}_{a-hyp}\mathfrak{g}.$

\begin{example}
{\rm We will show how to calculate the a-hyperbolic rank of $\mathfrak{g}=\mathfrak{e}_{6}^{\text{IV}}.$ We take the weighted Dynkin diagram of $\mathfrak{e}_{6}^{\mathbb{C}}$ and check how $\iota$ acts on it

$$
\xymatrix@=.4cm{
{\displaystyle \mathop{\circ}^{a}} \ar@{-}[r]<-.8ex>
        & \displaystyle \mathop{\circ}^{b} \ar@{-}[r]<-.8ex>
        & \displaystyle \mathop{\circ}^{c} \ar@{-}[r]<-.8ex> \ar@{-}[d]
        & \displaystyle \mathop{\circ}^{d} \ar@{-}[r]<-.8ex>
        & {\displaystyle \mathop{\circ}^{e}} \\ 
&& {\displaystyle \mathop{\circ}_{f}} &&
} \stackrel{\iota}{\mapsto} \xymatrix@=.4cm{
{\displaystyle \mathop{\circ}^{e}} \ar@{-}[r]<-.8ex>
        & \displaystyle \mathop{\circ}^{d} \ar@{-}[r]<-.8ex>
        & \displaystyle \mathop{\circ}^{c} \ar@{-}[r]<-.8ex> \ar@{-}[d]
        & \displaystyle \mathop{\circ}^{b} \ar@{-}[r]<-.8ex>
        & {\displaystyle \mathop{\circ}^{a}} \\ 
&& {\displaystyle \mathop{\circ}_{f}} &&
}
$$

where $a,b,c,d,e,f \geq 0.$ Next take the Satake diagram of $\mathfrak{e}_{6}^{\text{IV}}$
$$
\xymatrix@=.4cm{
{\displaystyle \mathop{\circ}} \ar@{-}[r]<-.8ex>
        & \displaystyle \mathop{\bullet} \ar@{-}[r]<-.8ex>
        & \displaystyle \mathop{\bullet} \ar@{-}[r]<-.8ex> \ar@{-}[d]
        & \displaystyle \mathop{\bullet} \ar@{-}[r]<-.8ex>
        & {\displaystyle \mathop{\circ}} \\ 
&& {\displaystyle \mathop{\bullet}} &&
}
$$

According to Definition \ref{dfff} the weighted Dynkin diagram of $\mathfrak{e}_{6}^{\mathbb{C}}$ matches the Satake diagram of $\mathfrak{e}_{6}^{\text{IV}}$ if and only if it is of the form:

$$
\xymatrix@=.4cm{
{\displaystyle \mathop{\circ}^{a}} \ar@{-}[r]<-.8ex>
        & \displaystyle \mathop{\circ}^{0} \ar@{-}[r]<-.8ex>
        & \displaystyle \mathop{\circ}^{0} \ar@{-}[r]<-.8ex> \ar@{-}[d]
        & \displaystyle \mathop{\circ}^{0} \ar@{-}[r]<-.8ex>
        & {\displaystyle \mathop{\circ}^{e}} \\ 
&& {\displaystyle \mathop{\circ}_{0}} &&
}
$$

Moreover, the above diagram is preserved by $\iota$ if and only if $a=e.$ Hence we obtain the following weighted Dynkin diagram:

$$
\xymatrix@=.4cm{
{\displaystyle \mathop{\circ}^{a}} \ar@{-}[r]<-.8ex>
        & \displaystyle \mathop{\circ}^{0} \ar@{-}[r]<-.8ex>
        & \displaystyle \mathop{\circ}^{0} \ar@{-}[r]<-.8ex> \ar@{-}[d]
        & \displaystyle \mathop{\circ}^{0} \ar@{-}[r]<-.8ex>
        & {\displaystyle \mathop{\circ}^{a}} \\ 
&& {\displaystyle \mathop{\circ}_{0}} &&
}
$$

Therefore $\mathfrak{b}^{+}$ has a dimension equal to the dimension of $Span^{+}((1,0,0,0,1,0)).$ Thus $\text{rank}_{a-hyp}\mathfrak{e}_{6}^{\text{IV}}=1$.}
 
\end{example}

Using the described procedure we get the following table of a-hyperbolic ranks which are not equal to the real ranks of the corresponding Lie algebras.
\FloatBarrier

\begin{center}
 \begin{table}[ht]
 \centering
 {\footnotesize
 \begin{tabular}{| c | c | c|}
   \hline
   \multicolumn{3}{|c|}{\textbf{Table 1.} \textbf{\textit{A-HYPERBOLIC RANK}}} \\
   \hline                        
   $\mathfrak{g}$ & \textbf{a-hyperbolic rank} & $\text{rank}_{\mathbb{R}}\mathfrak{g}$ \\
   \hline
   $\mathfrak{sl}(2k,\mathbb{R})$  & k &  2k-1 \\
   \footnotesize $k\geq 1$ & & \\
   \hline
   $\mathfrak{sl}(2k+1,\mathbb{R})$  & k & 2k \\
   \footnotesize $k\geq 1$ & & \\
   \hline
   $\mathfrak{su}^{\ast}(4k)$  & k & 2k-1 \\
   \footnotesize $k\geq 1$ & & \\
   \hline
   $\mathfrak{su}^{\ast}(4k+2)$  & k & 2k \\
   \footnotesize $k\geq 1$ & & \\
   \hline
   $\mathfrak{so}(2k+1,2k+1)$  & 2k & 2k+1 \\
   \footnotesize $k\geq 2$ & &  \\
   \hline
	 $\mathfrak{e}_{6}^{\text{I}}$ & 4 & 6 \\
	 \hline
   $\mathfrak{e}_{6}^{\text{IV}}$  & 1 & 2 \\
   \hline  
 \end{tabular}
 }
\captionsetup{justification=centering}
 \caption{
 This table contains all real forms of simple Lie algebras $\mathfrak{g}^{\mathbb{C}},$ for which $\text{rank}_{\mathbb{R}}\mathfrak{g} \neq \text{rank}_{a-hyp}\mathfrak{g}.$
 }
 \label{tab1}
 \end{table}
\end{center}

\noindent
We also need the following fact.

\begin{theorem}[Facts 5.1 and 5.3 in \cite{ok}]
Let $O$ be an antipodal hyperbolic orbit in $\mathfrak{g}$ and $W_{\mathfrak{g}}$ be the Weyl group of $\mathfrak{g}.$ Then $O \cap \mathfrak{a}$ is a single $W_{\mathfrak{g}}$-orbit in $\mathfrak{a}.$
\label{ttt1}   
\end{theorem}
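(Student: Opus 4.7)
The plan is to reduce the statement to Theorem \ref{tw3} combined with the classical fact that the closed Weyl chamber $\mathfrak{a}^{+}$ is a strict fundamental domain for the action of the little Weyl group $W_{\mathfrak{g}}=N_{K}(\mathfrak{a})/Z_{K}(\mathfrak{a})$ on $\mathfrak{a}$, i.e.\ that every $W_{\mathfrak{g}}$-orbit in $\mathfrak{a}$ meets $\mathfrak{a}^{+}$ in exactly one point. Once these two inputs are in place, the argument becomes a standard ``orbit has a unique chamber representative'' computation, and the antipodal hypothesis does not need to be used.

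First I would apply Theorem \ref{tw3} to the hyperbolic orbit $O$: since $O$ meets $\mathfrak{g}$, there exists a unique $X\in\mathfrak{a}^{+}$ with $O=\mathrm{Ad}(G)(X)$, and in particular $O\cap\mathfrak{a}$ is nonempty. Next I would prove the easy inclusion $W_{\mathfrak{g}}\cdot X\subset O\cap\mathfrak{a}$. This is immediate from the realization $W_{\mathfrak{g}}=N_{K}(\mathfrak{a})/Z_{K}(\mathfrak{a})$: every $w\in W_{\mathfrak{g}}$ acts on $\mathfrak{a}$ through conjugation by an element of $K\subset G$, so $w\cdot X$ is $\mathrm{Ad}(G)$-conjugate to $X$ and lies in $\mathfrak{a}$ by construction.

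For the reverse inclusion, I would pick an arbitrary $Y\in O\cap\mathfrak{a}$ and use the fundamental-domain property to find $w\in W_{\mathfrak{g}}$ with $w\cdot Y\in\mathfrak{a}^{+}$. Since $w\cdot Y$ is still $\mathrm{Ad}(G)$-conjugate to $Y$, it lies in $O\cap\mathfrak{a}^{+}$; the uniqueness clause of Theorem \ref{tw3} then forces $w\cdot Y=X$, so $Y\in W_{\mathfrak{g}}\cdot X$. Combining the two inclusions yields $O\cap\mathfrak{a}=W_{\mathfrak{g}}\cdot X$, a single $W_{\mathfrak{g}}$-orbit.

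The one place where care is needed, and which I regard as the ``main obstacle,'' is the compatibility of the two actions: one must interpret $W_{\mathfrak{g}}$ as the restricted Weyl group $N_{K}(\mathfrak{a})/Z_{K}(\mathfrak{a})$ (rather than, say, the complex Weyl group of the split Cartan $\mathfrak{j}_{\mathfrak{g}}$), because only for this group is it simultaneously true that (i) $W_{\mathfrak{g}}$-orbits in $\mathfrak{a}$ are automatically contained in $\mathrm{Ad}(G)$-orbits and (ii) $\mathfrak{a}^{+}$ is a strict fundamental domain. Both facts are standard for the restricted root system $\Sigma$, so once this identification is made explicit the proof goes through without further subtleties.
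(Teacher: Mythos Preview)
The paper does not supply a proof of this theorem; it merely cites it as Facts~5.1 and~5.3 from Okuda~\cite{ok}. Your argument is the standard one and is correct: the classical fact that two elements of $\mathfrak{a}$ lying in the same $Ad(G)$-orbit are $W_{\mathfrak{g}}$-conjugate follows exactly as you outline, by pushing both into $\mathfrak{a}^{+}$ via the little Weyl group and then invoking uniqueness of the chamber representative (which, as you implicitly use, can be read off from Lemma~\ref{l1} together with the inclusion $\mathfrak{a}^{+}\subset\mathfrak{j}^{+}$ and the containment of real $Ad(G)$-orbits in complex $Ad(G^{\mathbb{C}})$-orbits). Your observation that the antipodal hypothesis is superfluous is also correct; the result holds for any hyperbolic orbit in $\mathfrak{g}$, and indeed Okuda's Fact~5.1 is stated in that generality.
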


\subsection{Discontinuous actions of non virtually-abelian discrete subgroups}

Let $G$ be a linear, connected, semisimple, real Lie group with Lie algebra $\mathfrak{g}$ and $H\subset G$ be a closed and connected subgroup. Also, let $\mathfrak{h}$ be the Lie algebra of $H$ and $W_{\mathfrak{g}}$ be the Weyl group of $\mathfrak{g}.$

\begin{definition}
{\rm The subgroup $H$ is reductive in $G$ if $\mathfrak{h}$ is reductive in $\mathfrak{g},$ that is, there exists a Cartan involution $\theta $ for which $\theta (\mathfrak{h}) = \mathfrak{h}.$
The space $G/H$ is called the homogeneous space of reductive type. Moreover, in this setting the Lie algebra $\mathfrak{h}$ is reductive (which means that it is a sum of its center and the  semisimple part $[\mathfrak{h},\mathfrak{h}]$)}.
\end{definition}

\noindent
The Lie subalgebra $\mathfrak{h}$ is reductive in $\mathfrak{g},$ therefore we can choose a Cartan involution $\theta$ of $\mathfrak{g}$ preserving $\mathfrak{h}.$ We obtain the Iwasawa decomposition
$$\mathfrak{h} = \mathfrak{k}_{h} + \mathfrak{a}_{h} + \mathfrak{n}_{h}$$
which is compatible with the decomposition $\mathfrak{g} = \mathfrak{k} + \mathfrak{a} + \mathfrak{n}$ (that is $\mathfrak{k}_{h} \subset \mathfrak{k},$ $\mathfrak{a}_{h} \subset \mathfrak{a}$ and $\mathfrak{n}_{h} \subset \mathfrak{n}$). Y. Benoist in \cite{ben} gave the following characterization of homogeneous spaces $G/H$ which admit a discontinuous action of a non virtually abelian subgroup $\Gamma\subset G$.

\begin{theorem}[Theorem 1 in \cite{ben}]
The group $G$ contains a discrete and non virtually abelian subgroup $\Gamma$ which acts discontinuously on $G/H$ if and only if for every $w$ in $W_{\mathfrak{g}}$, $w \cdot \mathfrak{a}_{h}$ does not contain $\mathfrak{b}^{+}.$ Also one can choose $\Gamma$ to be Zariski dense in $G$.
\label{be}
\end{theorem}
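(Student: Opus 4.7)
The plan is to treat the two implications separately, combining Kobayashi's properness criterion for reductive homogeneous spaces with the elementary Lemma \ref{lin} and the antipodal-orbit parametrisation from Lemma \ref{lma}; the Zariski-dense strengthening of $\Gamma$ is an added layer that I would import as a black box from \cite{ben}.

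For the sufficiency, suppose that $w\cdot\mathfrak{a}_{h}$ does not contain $\mathfrak{b}^{+}$ for any $w\in W_{\mathfrak{g}}$. Since $W_{\mathfrak{g}}$ is finite and each $w\cdot\mathfrak{a}_{h}$ is a linear subspace, Lemma \ref{lin} applied to the convex cone $\mathfrak{b}^{+}$ produces $X\in\mathfrak{b}^{+}$ with $X\notin w\cdot\mathfrak{a}_{h}$ for every $w$. By Lemma \ref{lma}, $X$ parametrises an antipodal hyperbolic orbit $O_{X}=Ad(G)(X)$; the antipodality places $X$ as the neutral element of an $\mathfrak{sl}_{2}$-triple in $\mathfrak{g}$ (cf.\ \cite{ok}), giving a subgroup $L\subset G$ locally isomorphic to $SL(2,\mathbb{R})$ whose Cartan projection equals $\mathbb{R}_{\geq 0}\cdot X$. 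Kobayashi's criterion then forces the $L$-action on $G/H$ to be proper, because $\mathbb{R}_{\geq 0}\cdot X$ meets $W_{\mathfrak{g}}\cdot\mathfrak{a}_{h}$ only at $0$. A torsion-free cocompact lattice $\Gamma\subset L$ (a surface group) is then discrete in $G$, non virtually-abelian, and acts discontinuously on $G/H$.

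For the necessity, let $\Gamma$ be as in the statement. By Kobayashi's criterion the limit cone $\mathcal{L}_{\Gamma}\subset\mathfrak{a}^{+}$---the closed cone spanned by the Jordan projections of elements of $\Gamma$---meets $W_{\mathfrak{g}}\cdot\mathfrak{a}_{h}$ only at $0$. Because $\Gamma$ is closed under inversion and the Jordan projection of $\gamma^{-1}$ equals $-w_{0}$ applied to that of $\gamma$ up to the action of $W_{\mathfrak{g}}$, the cone $\mathcal{L}_{\Gamma}$ is invariant under $-w_{0}$, and for any non-zero $Y\in\mathcal{L}_{\Gamma}$ the average $\tfrac{1}{2}(Y-w_{0}Y)$ lies in $\mathcal{L}_{\Gamma}\cap\mathfrak{b}^{+}$ by convexity. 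Provided this average is non-zero, it sits in $\mathfrak{b}^{+}\setminus W_{\mathfrak{g}}\cdot\mathfrak{a}_{h}$, and this single witness suffices to conclude that no $w\cdot\mathfrak{a}_{h}$ contains $\mathfrak{b}^{+}$.

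The main obstacle is the degenerate case when the average above vanishes for every $Y\in\mathcal{L}_{\Gamma}$, i.e.\ when $\mathcal{L}_{\Gamma}$ is contained in the $(-1)$-eigenspace of $-w_{0}$. The remedy, which is the substantive content of \cite{ben} and which simultaneously yields the Zariski-dense strengthening stated in the theorem, is Benoist's result that the limit cone of a Zariski dense discrete subgroup of a semisimple group has non-empty interior in $\mathfrak{a}^{+}$, so such a cone cannot lie in any proper linear subspace. The Zariski dense $\Gamma$ itself is constructed in \cite{ben} by a ping-pong/Schottky argument with pairs of loxodromic elements whose Jordan projections cluster near the chosen $X\in\mathfrak{b}^{+}\setminus W_{\mathfrak{g}}\cdot\mathfrak{a}_{h}$; this construction is the genuinely difficult step, and I would invoke it as a black box, emphasising that the novelty of the reformulation lies in the passage from $\mathfrak{a}^{+}$ to $\mathfrak{b}^{+}$, which enables the $\iota$-based combinatorial test with Satake diagrams used later in the paper.
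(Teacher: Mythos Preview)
The paper does not prove this theorem at all: it is quoted verbatim as ``Theorem~1 in \cite{ben}'' and used thereafter as a black box (most notably in the proof of Theorem~\ref{twg}). There is therefore no ``paper's own proof'' to compare against; the authors treat Benoist's result as an external input, exactly as you yourself propose to do in your last paragraph.

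Since you went further and sketched an argument, one substantive gap is worth flagging. In your sufficiency direction you claim that antipodality of $O_X$ ``places $X$ as the neutral element of an $\mathfrak{sl}_2$-triple'', and then use the resulting copy of $SL(2,\mathbb{R})$. This is not true for an arbitrary $X\in\mathfrak{b}^+$: being the semisimple member of an $\mathfrak{sl}_2$-triple forces the weights $\alpha(X)$ to lie in $\{0,1,2\}$ (Jacobson--Morozov/Dynkin--Kostant), whereas a generic point of $\mathfrak{b}^+$ has arbitrary non-negative real weights. Okuda's dictionary in \cite{ok} links antipodal hyperbolic orbits to \emph{complex} nilpotent orbits, but only those complex nilpotent orbits that meet $\mathfrak{g}$ yield a real $\mathfrak{sl}_2$-triple, and that is an additional constraint not implied by $X\in\mathfrak{b}^+$. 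Benoist's actual construction in \cite{ben} does not pass through an $SL(2,\mathbb{R})$ subgroup at all: he builds a free Schottky group by ping-pong with proximal elements whose Cartan/Jordan projections cluster near $X$, and this is precisely what gives both the proper action and the Zariski density simultaneously. Your necessity sketch is closer to Benoist's line of reasoning, and you correctly isolate the non-trivial input (convexity and full-dimensionality of the limit cone of a Zariski-dense subgroup) as the place where \cite{ben} must be invoked.
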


\noindent
In this section we will show an effective way of determining if such action exists. Let $\mathfrak{g}^{\mathbb{C}}$ be a complexification of $\mathfrak{g}$ and assume that a subalgebra $\mathfrak{h} \subset \mathfrak{g}$ is reductive in $\mathfrak{g}.$ Since the subalgebra $\mathfrak{h}$ is reductive we have
$$\mathfrak{h} = \mathfrak{z} (\mathfrak{h}) + [\mathfrak{h},\mathfrak{h}],$$

\noindent
Let $\mathfrak{b}_{[h,h]}^{+}$ be the convex cone constructed according to the procedure described in the previous subsection (for  $[\mathfrak{h},\mathfrak{h}]$). We will also need the following lemma.

\begin{lemma}
Let $X\in \mathfrak{b}_{[h,h]}^{+}.$ The orbit $O_{X}^{g}:=Ad(G)(X)$ is an antipodal hyperbolic orbit in $\mathfrak{g}.$
\label{lemm2}
\end{lemma}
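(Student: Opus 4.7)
The plan is to verify the two conditions separately: that $O_X^g$ consists of hyperbolic elements of $\mathfrak{g}$, and that $-X\in O_X^g$.

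For hyperbolicity, I would first locate $X$ inside the ambient $\mathfrak{a}$. Since $\theta$ preserves $\mathfrak{h}$ and the decomposition $\mathfrak{h}=\mathfrak{z}(\mathfrak{h})+[\mathfrak{h},\mathfrak{h}]$ is intrinsic, $\theta$ preserves both summands, so one obtains a Cartan decomposition of $[\mathfrak{h},\mathfrak{h}]$ compatible with that of $\mathfrak{g}$. By the standard compatibility of Iwasawa decompositions already invoked in the text one has $\mathfrak{a}_{[h,h]}\subset\mathfrak{a}_h\subset\mathfrak{a}$. Hence by construction $\mathfrak{b}^+_{[h,h]}\subset\mathfrak{a}^+_{[h,h]}\subset\mathfrak{a}\subset\mathfrak{p}$, so $X$ lies in $\mathfrak{a}$. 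Since $\mathfrak{a}$ is a common real diagonalizing subspace for the restricted root decomposition of $\mathfrak{g}$, $\operatorname{ad}_X$ acting on $\mathfrak{g}$ is diagonalizable with real eigenvalues, so $X$ is hyperbolic in $\mathfrak{g}$ and $O_X^g$ is a hyperbolic orbit.

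For antipodality, I would invoke Lemma \ref{lma} applied to the semisimple algebra $[\mathfrak{h},\mathfrak{h}]$. The only subtlety is that this lemma is stated for a single $\mathfrak{g}$, while $[\mathfrak{h},\mathfrak{h}]$ is only semisimple. Writing the simple decomposition $[\mathfrak{h},\mathfrak{h}]=\bigoplus_i\mathfrak{s}_i$ and, according to the definition of $\mathfrak{b}^+$ for semisimple algebras, $X=\sum_i X_i$ with $X_i\in\mathfrak{b}^+_{\mathfrak{s}_i}$, each $X_i$ generates an antipodal hyperbolic orbit in $\mathfrak{s}_i$, so there exists $g_i$ in the connected subgroup $S_i\subset G$ integrating $\mathfrak{s}_i$ with $\operatorname{Ad}(g_i)X_i=-X_i$. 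Since distinct simple summands commute, $g_i$ centralizes $X_j$ for $j\neq i$, and $g:=\prod_i g_i\in G$ satisfies $\operatorname{Ad}(g)X=-X$. Thus $-X\in O_X^g$.

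The main obstacle, as I see it, is purely bookkeeping: one has to keep straight the three nested choices ($\mathfrak{a}_{[h,h]}\subset\mathfrak{a}_h\subset\mathfrak{a}$) and ensure the weighted Dynkin / Satake-matching data for $[\mathfrak{h},\mathfrak{h}]$ actually sits inside the $\mathfrak{a}$ of $\mathfrak{g}$. Once that compatibility of decompositions is in hand, both conclusions are immediate consequences of earlier statements (hyperbolicity from membership in $\mathfrak{p}$, antipodality from Lemma \ref{lma} applied componentwise).
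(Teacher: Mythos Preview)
Your proof is correct and follows essentially the same route as the paper's: hyperbolicity comes from the inclusion $X\in\mathfrak{b}^+_{[h,h]}\subset\mathfrak{a}_h\subset\mathfrak{a}$ (so $\operatorname{ad}_X$ is real-diagonalizable on $\mathfrak{g}$), and antipodality comes from the fact that $X$ already lies on an antipodal orbit for $H$, whence some $h\in H\subset G$ sends $X$ to $-X$. Your handling of the semisimple-but-not-simple case of $[\mathfrak{h},\mathfrak{h}]$ by splitting into simple ideals and multiplying the witnesses $g_i$ is more explicit than the paper, which simply asserts in one line that ``$X$ defines an antipodal hyperbolic orbit in $\mathfrak{h}$''; otherwise the arguments coincide. (The Satake-matching concern in your last paragraph is extraneous for this particular lemma---only the chain of inclusions into $\mathfrak{a}$ is used.)
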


\begin{proof}
The vector $X$ defines an antipodal hyperbolic orbit in $\mathfrak{h}.$ Therefore we can find $h \in H \subset G$ such that
$Ad_{h}(X) = - X$.
Since for the decomposition (defined by the Cartan involution $\theta$) 
$$\mathfrak{g} = \mathfrak{k} + \mathfrak{a} + \mathfrak{n}$$ 
the space $\mathfrak{a}$ consists of vectors for which $ad$ is diagonalizable with real values and
$X \in \mathfrak{b}_{[h,h]}^{+} \subset \mathfrak{a}_{h} \subset \mathfrak{a}$,
 therefore, vector $X$ is hyperbolic in $\mathfrak{g}.$ 
Thus $Ad(G)(X)$ is a hyperbolic orbit in $\mathfrak{g}$ and $-X \in Ad(G)(X).$
\end{proof}

\subsection{Main result}
Now we are ready to state the main result of this work. It can be considered as an important supplement to the celebrated Kobayashi's theorem concerning discontinuous actions of  discrete subgroups. The latter is cited below (recall once more that we assume all Lie groups to be linear).

\begin{theorem}[Corollary 4.4 in \cite{kob1}]
For a homogeneous space $G/H$ of reductive type, the following conditions are equivalent:
\begin{enumerate}
\item $G/H$ admits a  discontinuous action of an infinite discrete subgroup $\Gamma$ of $G$;
\item $\text{\rm rank}_{\mathbb{R}}\mathfrak{g} > \text{\rm rank}_{\mathbb{R}}\mathfrak{h}.$
\end{enumerate}
\end{theorem}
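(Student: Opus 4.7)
The strategy is to reduce the statement to Kobayashi's properness criterion (the underlying theorem from \cite{kob1}), which asserts that for a reductive subgroup $L \subset G$ and reductive $H \subset G$, the action of $L$ on $G/H$ is proper if and only if, for every compact $C \subset \mathfrak{a}$, the intersection $\mu(L) \cap (W_{\mathfrak{g}} \cdot \mu(H) + C)$ is compact, where $\mu : G \to \mathfrak{a}^{+}$ denotes the Cartan projection coming from $G = K A^{+} K$. For discrete $\Gamma$ acting on a reductive $G/H$, discontinuity of the action is equivalent to properness, so one needs to construct (or obstruct) an infinite discrete $\Gamma$ whose Cartan projection stays away from $W_{\mathfrak{g}} \cdot \mathfrak{a}_{h}$ outside a compact set.

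For the direction (2) $\Rightarrow$ (1), I would fix a Cartan involution $\theta$ with $\theta(\mathfrak{h}) = \mathfrak{h}$ and choose compatible Iwasawa decompositions $\mathfrak{g} = \mathfrak{k} + \mathfrak{a} + \mathfrak{n}$ and $\mathfrak{h} = \mathfrak{k}_{h} + \mathfrak{a}_{h} + \mathfrak{n}_{h}$. Since $\dim \mathfrak{a}_{h} < \dim \mathfrak{a}$ and $W_{\mathfrak{g}}$ is finite, the set $W_{\mathfrak{g}} \cdot \mathfrak{a}_{h}$ is a finite union of proper linear subspaces of $\mathfrak{a}$; by Lemma \ref{lin} (or a direct Baire-category argument) one can pick $X \in \mathfrak{a} \setminus W_{\mathfrak{g}} \cdot \mathfrak{a}_{h}$. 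Taking $\Gamma := \exp(\mathbb{Z} X)$ yields an infinite discrete subgroup whose Cartan projection lies on the ray $\mathbb{R}_{\geq 0} \cdot X$, a ray disjoint from every $w \cdot \mathfrak{a}_{h}$. Kobayashi's criterion then delivers properness, hence a discontinuous action.

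For the converse (1) $\Rightarrow$ (2), I would argue contrapositively: if $\mathrm{rank}_{\mathbb{R}} \mathfrak{g} = \mathrm{rank}_{\mathbb{R}} \mathfrak{h}$, then after conjugation $\mathfrak{a}_{h} = \mathfrak{a}$, so $W_{\mathfrak{g}} \cdot \mathfrak{a}_{h} = \mathfrak{a}$ coincides with the full Cartan subspace. Any infinite discrete $\Gamma \subset G$ must have an unbounded Cartan projection $\mu(\Gamma) \subset \mathfrak{a}^{+}$ (otherwise $\Gamma$ lies in a $K$-double-coset over a compact piece of $A$, which is relatively compact in $G$, contradicting both discreteness and infinitude). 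Then $\mu(\Gamma) \cap (\mu(H) + C)$ fails to be compact for suitable $C$, so the action is not proper, and for discrete subgroups on reductive homogeneous spaces this rules out discontinuity. The main obstacle is genuinely the properness criterion from \cite{kob1}: once one has it, both implications reduce to elementary linear-algebraic and topological observations about the Cartan projection, but establishing the criterion itself requires a careful analysis of how sequences in $G$ escape to infinity relative to the $K A^{+} K$ decomposition.
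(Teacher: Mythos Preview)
The paper does not supply a proof of this statement: it is quoted as Corollary~4.4 of \cite{kob1} and invoked only as background (the ``Calabi--Markus phenomenon settled by Kobayashi''). There is therefore no in-paper argument to compare your proposal against; what you have written is essentially the standard derivation of this corollary from Kobayashi's properness criterion in \cite{kob1}, and it is correct in substance.

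Two minor points of precision. First, Lemma~\ref{lin} as stated in the paper concerns a convex cone contained in a finite union of subspaces; for your direction $(2)\Rightarrow(1)$ you only need the simpler fact that a finite union of proper linear subspaces of $\mathfrak{a}$ cannot cover $\mathfrak{a}$, which is immediate over $\mathbb{R}$ and does not require Lemma~\ref{lin}. Second, the Cartan projection of $\exp(nX)$ is the unique $W_{\mathfrak{g}}$-translate of $nX$ lying in $\mathfrak{a}^{+}$, so for $n<0$ it lies on the ray generated by the $\mathfrak{a}^{+}$-representative of $-X$, not on $\mathbb{R}_{\geq 0}\cdot X$ itself; since $\mathfrak{a}_{h}$ is a linear subspace and $W_{\mathfrak{g}}$ is a group, one still has $-X\notin W_{\mathfrak{g}}\cdot\mathfrak{a}_{h}$, and the distance from $\mu(\exp(nX))$ to $\mathfrak{a}_{h}$ grows linearly in $|n|$, so the conclusion is unaffected.
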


Our main result is the following  theorem.

\begin{theorem}\label{twg}
Let $G$ be a connected and semisimple linear Lie group and $H$ a reductive subgroup with a finite number of connected components. Let $\mathfrak{g}$ and $\mathfrak{h}$ denote the appropriate Lie algebras. Then
 \begin{enumerate}
  \item If $\text{\rm rank}_{a-hyp}\mathfrak{g} = \text{\rm rank}_{a-hyp}\mathfrak{h}$ then $G/H$ does not admit discontinuous actions of non virtually-abelian discrete subgroups (and, therefore,  compact Clifford-Klein forms).
  \item If $\text{\rm rank}_{a-hyp}\mathfrak{g} > \text{\rm rank}_{\mathbb{R}} \mathfrak{h}$ then $G/H$ admits a discontinuous action of a non virtually-abelian discrete subgroup.
 \end{enumerate}
\end{theorem}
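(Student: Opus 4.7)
Both parts will be read off Benoist's criterion (Theorem \ref{be}): a non virtually-abelian $\Gamma$ acts discontinuously on $G/H$ if and only if $\mathfrak{b}^+\not\subset w\cdot\mathfrak{a}_h$ for every $w\in W_{\mathfrak{g}}$. The plan is therefore to settle both conclusions as (non-)containment questions between the convex cone $\mathfrak{b}^+$ and the finitely many Weyl-translates of the vector subspace $\mathfrak{a}_h$.

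Part (2) is a pure dimension count. By construction $\dim\mathfrak{b}^+=\text{\rm rank}_{a-hyp}\mathfrak{g}$ and $\dim(w\cdot\mathfrak{a}_h)=\text{\rm rank}_{\mathbb{R}}\mathfrak{h}$ for every $w\in W_{\mathfrak{g}}$, so under the strict inequality $\text{\rm rank}_{a-hyp}\mathfrak{g}>\text{\rm rank}_{\mathbb{R}}\mathfrak{h}$ the linear span of $\mathfrak{b}^+$ is of strictly larger dimension than any $w\cdot\mathfrak{a}_h$; the cone cannot fit, and Theorem \ref{be} delivers the required $\Gamma$.

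Part (1) asks for a single $w_0\in W_{\mathfrak{g}}$ with $\mathfrak{b}^+\subset w_0\cdot\mathfrak{a}_h$. Given any $X\in\mathfrak{b}_{[h,h]}^+\subset\mathfrak{a}_h$, Lemma \ref{lemm2} places $Ad(G)(X)$ among the antipodal hyperbolic orbits of $\mathfrak{g}$, Lemma \ref{lma} pairs it with a unique $Y\in\mathfrak{b}^+$, and since $X,Y\in\mathfrak{a}$ Theorem \ref{ttt1} produces $w\in W_{\mathfrak{g}}$ with $X=w\cdot Y$. Letting $X$ range over $\mathfrak{b}_{[h,h]}^+$ yields
$$\mathfrak{b}_{[h,h]}^+\subset\bigcup_{w\in W_{\mathfrak{g}}} w\cdot\mathrm{span}(\mathfrak{b}^+),$$
a finite union of vector subspaces, so Lemma \ref{lin} supplies $w_0$ with $\mathrm{span}(\mathfrak{b}_{[h,h]}^+)\subset w_0\cdot\mathrm{span}(\mathfrak{b}^+)$. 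The rank equality $\text{\rm rank}_{a-hyp}\mathfrak{g}=\text{\rm rank}_{a-hyp}\mathfrak{h}$ makes both spans equidimensional, hence equal, and combining this with $\mathrm{span}(\mathfrak{b}_{[h,h]}^+)\subset\mathfrak{a}_h$ yields
$$\mathfrak{b}^+\subset\mathrm{span}(\mathfrak{b}^+)=w_0^{-1}\cdot\mathrm{span}(\mathfrak{b}_{[h,h]}^+)\subset w_0^{-1}\cdot\mathfrak{a}_h,$$
which via Theorem \ref{be} excludes any non virtually-abelian discontinuous action and a fortiori any compact Clifford-Klein form.

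The main subtlety is the bookkeeping in part (1): the natural union $\bigcup_w w\cdot\mathfrak{b}^+$ is only a union of convex cones, not vector subspaces, so one must enlarge to spans before Lemma \ref{lin} becomes applicable, and the rank equality is precisely what promotes the ensuing inclusion of spans to an equality so that the needed cone containment $\mathfrak{b}^+\subset w_0^{-1}\cdot\mathfrak{a}_h$ can be recovered. All remaining ingredients (Lemmas \ref{lemm2}, \ref{lma}, \ref{lin} and Theorems \ref{ttt1}, \ref{be}) are used off the shelf.
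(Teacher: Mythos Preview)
Your proposal is correct and follows essentially the same route as the paper's own proof: both parts are read off Benoist's criterion (Theorem \ref{be}), with part (2) a dimension count and part (1) obtained by pushing $\mathfrak{b}_{[h,h]}^{+}$ into $\bigcup_{w}w\cdot\mathrm{span}(\mathfrak{b}^{+})$ via Lemmas \ref{lemm2}, \ref{lma} and Theorem \ref{ttt1}, applying Lemma \ref{lin} to the union of linear spans, and using the rank equality to upgrade the resulting inclusion of spans to an equality. Your remark about why one must pass to spans before invoking Lemma \ref{lin} is precisely the subtlety the paper handles as well.
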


\begin{proof}
Let us begin with the proof of the first claim of the theorem. By Theorem \ref{be}, the non-existence of a discontinuous action of a non virtually-abelian discrete subgroup for $G/H$ can be reformulated as the following sequence of the equivalent conditions:
\begin{enumerate}
\item $\mathfrak{b}^{+} \subset w\cdot \mathfrak{a}_{h}$ for some $w\in W_{\mathfrak{g}},$
\item $Span(\mathfrak{b}^{+}) \subset w\cdot \mathfrak{a}_{h}$ for some $w\in W_{\mathfrak{g}},$
\item $w^{-1}\cdot Span(\mathfrak{b}^{+}) \subset \mathfrak{a}_{h}$ for some $w\in W_{\mathfrak{g}}.$
\end{enumerate} 
Here is the proof of their equivalence. The first two are equivalent, because  if a vector space contains $B$ then it also contains $Span(B).$ The equivalence of the second and the third condition is straightforward.

Taking into consideration the above equivalences, one reformulates  the condition in Theorem \ref{be} as follows: for every $w$ in $W_{\mathfrak{g}}$ the space $w\cdot Span(\mathfrak{b}^{+})$ is not contained in $\mathfrak{a}_{h}.$
According to Lemma \ref{lma} and Lemma \ref{lemm2} the  orbit $Ad(G)(X)$ for $X\in \mathfrak{b}_{[h,h]}^{+} \subset \mathfrak{a}$ is an antipodal hyperbolic orbit. Therefore we can find $Y \in \mathfrak{b}^{+} \subset \mathfrak{a}$ such that
$Ad(G)(X)=Ad(G)(Y).$
Theorem \ref{ttt1} implies that $X,Y$ are in a certain $W_{\mathfrak{g}}$-orbit in $\mathfrak{a},$ that is $X=w\cdot Y$ for some $w\in W_{\mathfrak{g}}.$ We shall prove that such $w\in W_{\mathfrak{g}}$ can
be taken independently from the choice of $X \in \mathfrak{b}_{[h,h]}^{+}$. We have
$$\mathfrak{b}_{[h,h]}^{+} \subset W_{\mathfrak{g}} \cdot \mathfrak{b}^{+} = \bigcup_{w\in W_{\mathfrak{g}}} w\cdot \mathfrak{b}^{+}.$$
Since $\mathfrak{b}^{+} \subset Span(\mathfrak{b}^{+})$ we obtain
$$\mathfrak{b}_{[h,h]}^{+} \subset W_{\mathfrak{g}} \cdot Span(\mathfrak{b}^{+}) \subset \mathfrak{a}.$$
Taking into consideration Lemma \ref{lin} we get the inclusion
\begin{equation}
\mathfrak{b}_{[h,h]}^{+} \subset w\cdot Span(\mathfrak{b}^{+}),
\label{eqq1}
\end{equation}
for some $w \in W_{\mathfrak{g}}.$ Since the a-hyperbolic ranks are equal one has the equality 
$$\dim\mathfrak{b}_{[h,h]}^{+}=\dim \ Span(\mathfrak{b}^{+}).$$ 
Since $w$ is an automorphism one obtains
$$Span(\mathfrak{b}_{[h,h]}^{+})=w\cdot Span(\mathfrak{b}^{+}),$$
which implies
$$w\cdot Span(\mathfrak{b}^{+})=Span(\mathfrak{b}_{[h,h]}^{+}) \subset \mathfrak{a}_{h}.$$

\noindent
The second claim is also straightforward. Every element $w\in W_{\mathfrak{g}}$ acts on $\mathfrak{a}$ by linear transformations, and, therefore, preserves the dimension $n$ of the subspace $\mathfrak{a}_{h} \subset \mathfrak{a}.$ Our assumption implies that $\mathfrak{b}^{+}$ contains subset of more than $n$ linearly independent vectors, therefore $\mathfrak{b}^{+}$ can not be a subset of linear subspace $w\cdot \mathfrak{a}_{h} \subset \mathfrak{a}$ (for any $w$).

\end{proof} 

The following remarks seem to be in order. The celebrated Calabi-Marcus phenomenon settled by Kobayashi states that $G/H$ does not admit infinite discontinuous group if and only of 
\begin{itemize}
\item  $\text{rank}_{\mathbb{R}}G=\text{rank}_{\mathbb{R}}H \ \ ({\text{A}})$.
\end{itemize}
Theorem \ref{twg} states that $G/H$ does not admit non virtually abelian discontinuous group if 
\begin{itemize}
\item  $\text{rank}_{a-hyp}G=\text{rank}_{a-hyp}H \ \ ({\text{B}})$.
\end{itemize} 
Conversely, non virtually abelian discontinuous groups exist for $G/H$ if 
\begin{itemize}
\item  $\text{rank}_{a-hyp}G>\text{rank}_{\mathbb{R}}H \ \ (\text{C})$. 
\end{itemize}
Thus, it would be interesting to understand to what extent conditions (A),(B) and (C) together are close to a criterion. Notice that the first condition in Theorem \ref{twg} (that is, condition (B)) is not necessary for the non-existence of discontinuous groups that are non virtually abelian. Consider the following examples.
\begin{example}
{\rm For $G=SL(10,\mathbb{R})$ and $H=SO(5,5)$ we have
$$\text{rank}_{a-hyp}\mathfrak{g} = 5 > 4 = \text{rank}_{a-hyp}\mathfrak{h},$$
and also $\text{rank}_{\mathbb{R}}G=9>\text{rank}_{\mathbb{R}}H=5$.
By the results of Benoist (see Example 1 in \cite{ben}) $G/H$ does not admit actions of non virtually-abelian discontinuous groups, but none of the conditions (A) or (B) is satisfied.}
\end{example}

\begin{example}
{\rm For $G=SL(10,\mathbb{R})$ and $H=SL(3,\mathbb{R}) \times SL(7,\mathbb{R})$ we have
$$\text{rank}_{a-hyp}\mathfrak{g} = 5 > 4 = 1+3 = \text{rank}_{a-hyp}\mathfrak{h},$$
and $G/H$ admits actions of non virtually-abelian discontinuous groups (see Example 2 in \cite{ben}). Notice that the condition (C) is not satisfied since $\text{rank}_{\mathbb{R}}\mathfrak{h}=8$}.
\end{example}

\section{New examples}

Let $G$ be a semisimple, linear Lie group with Lie algebra $\mathfrak{g}$ and $H\subset G$ a closed subgroup with Lie algebra $\mathfrak{h}$.

\begin{theorem}[\cite{yo}]
If $H$ is semisimple then $H$ is reductive in $G.$
\label{te1}
\end{theorem}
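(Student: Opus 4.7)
The theorem reduces to producing a Cartan involution $\theta$ of $\mathfrak{g}$ satisfying $\theta(\mathfrak{h})=\mathfrak{h}$. My plan follows Mostow's classical approach, carried out at the complexified level. The key input is that $\mathfrak{h}$, being a real semisimple Lie algebra in its own right, possesses an intrinsic Cartan decomposition $\mathfrak{h}=\mathfrak{k}_\mathfrak{h}+\mathfrak{p}_\mathfrak{h}$, and hence a compact real form $\mathfrak{u}_\mathfrak{h}=\mathfrak{k}_\mathfrak{h}+i\mathfrak{p}_\mathfrak{h}$ of $\mathfrak{h}^{\mathbb{C}}\subset\mathfrak{g}^{\mathbb{C}}$.

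First I would invoke the Cartan--Karpelevich extension theorem: every compact subalgebra of a complex semisimple Lie algebra lies inside some compact real form. Applied to $\mathfrak{u}_\mathfrak{h}$, this yields a compact real form $\mathfrak{u}$ of $\mathfrak{g}^{\mathbb{C}}$ containing $\mathfrak{u}_\mathfrak{h}$. Let $\sigma$ and $\tau$ denote the complex conjugations of $\mathfrak{g}^{\mathbb{C}}$ with respect to $\mathfrak{g}$ and $\mathfrak{u}$, respectively. The containment $\mathfrak{u}\supset\mathfrak{u}_\mathfrak{h}$ forces $\tau(\mathfrak{h}^{\mathbb{C}})=\mathfrak{h}^{\mathbb{C}}$, since $\mathfrak{h}^{\mathbb{C}}=\mathfrak{u}_\mathfrak{h}\oplus i\mathfrak{u}_\mathfrak{h}$ and $\tau$ fixes $\mathfrak{u}_\mathfrak{h}$ pointwise. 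If in addition $\sigma\tau=\tau\sigma$, then $\theta:=\sigma\tau|_\mathfrak{g}$ is a Cartan involution of $\mathfrak{g}$ preserving $\mathfrak{h}$, and the theorem follows at the Lie algebra level; the statement for $H\subset G$ then results from the assumption that $H$ has finitely many components and the definition of reductivity in $G$.

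The main obstacle is arranging $\sigma\tau=\tau\sigma$ without destroying the inclusion $\mathfrak{u}\supset\mathfrak{u}_\mathfrak{h}$. Here I would apply the standard deformation argument: the complex-linear operator $(\sigma\tau)^2$ is positive with respect to the Hermitian form induced by $\tau$, hence equals $\mathrm{Ad}(\exp W)$ for a unique $W\in\mathfrak{g}^{\mathbb{C}}$ (using that every derivation of a semisimple Lie algebra is inner). Setting $\tau':=\mathrm{Ad}(\exp(-W/4))\,\tau\,\mathrm{Ad}(\exp(W/4))$ yields $\sigma\tau'=\tau'\sigma$ by a short direct computation. What makes this work in our setting is that the restrictions of $\sigma$ and $\tau$ to the $\sigma$- and $\tau$-stable subalgebra $\mathfrak{h}^{\mathbb{C}}$ already commute, being the two conjugations associated with the intrinsic Cartan decomposition of $\mathfrak{h}$. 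Consequently $(\sigma\tau)^2$ acts trivially on $\mathfrak{h}^{\mathbb{C}}$, so $W$ lies in $\mathfrak{z}_{\mathfrak{g}^{\mathbb{C}}}(\mathfrak{h}^{\mathbb{C}})$, and $\mathrm{Ad}(\exp(\pm W/4))$ fixes $\mathfrak{u}_\mathfrak{h}$ pointwise; thus the new compact real form $\mathfrak{u}':=\mathrm{Ad}(\exp(-W/4))\,\mathfrak{u}$ still contains $\mathfrak{u}_\mathfrak{h}$, and $\theta=\sigma\tau'|_\mathfrak{g}$ is the required Cartan involution of $\mathfrak{g}$ preserving $\mathfrak{h}$.
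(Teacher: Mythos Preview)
The paper does not give a proof of this theorem; it merely quotes the result with a reference to Yosida \cite{yo}. So there is nothing to compare against line by line. Your argument is essentially the classical Mostow--Cartan deformation proof and is correct in outline: build a compact real form $\mathfrak{u}_\mathfrak{h}$ of $\mathfrak{h}^{\mathbb{C}}$ from the intrinsic Cartan decomposition of $\mathfrak{h}$, extend it to a compact real form $\mathfrak{u}$ of $\mathfrak{g}^{\mathbb{C}}$, and then use the positive operator $P=(\sigma\tau)^2$ to rotate $\tau$ into a conjugation commuting with $\sigma$ while fixing $\mathfrak{u}_\mathfrak{h}$.

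Two small points. First, your phrase ``$(\sigma\tau)^2=\mathrm{Ad}(\exp W)$ for a unique $W$'' is slightly loose: what you actually use is that $P$ is positive self-adjoint with respect to $B_\tau$, so $\log P$ is a well-defined self-adjoint derivation of $\mathfrak{g}^{\mathbb{C}}$, hence equals $\mathrm{ad}(W)$ with $W$ unique by injectivity of $\mathrm{ad}$; the conclusion $\mathrm{ad}(W)|_{\mathfrak{h}^{\mathbb{C}}}=0$ then follows because $\mathfrak{h}^{\mathbb{C}}$ lies in the $1$-eigenspace of $P$. Second, the remark about $H$ having finitely many components is unnecessary here: the paper's Definition of ``reductive in $G$'' is purely at the Lie algebra level ($\theta(\mathfrak{h})=\mathfrak{h}$), so once you have produced such a $\theta$ you are done.
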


\begin{corollary}
If $\text{\rm rank}_{a-hyp}\mathfrak{h} = \text{\rm rank}_{a-hyp}\mathfrak{g}$ for semisimple $H$ then $G/H$ does not admit a discontinuous action of a non virtually-abelian discrete subgroup. If $\text{\rm rank}_{\mathbb{R}}\mathfrak{h} < \text{\rm rank}_{a-hyp}\mathfrak{g}$ then $G/H$ admits a discontinuous action of a non virtually-abelian discrete subgroup.
\end{corollary}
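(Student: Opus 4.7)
The plan is to deduce both claims directly from Theorem \ref{twg} by invoking Theorem \ref{te1} to supply the hypothesis of reductivity. Since $H$ is assumed semisimple, Theorem \ref{te1} guarantees that $H$ is reductive in $G$, so the Lie algebra $\mathfrak{h}$ is reductive in $\mathfrak{g}$ in the sense required by Theorem \ref{twg}. The only other structural assumption in Theorem \ref{twg} is that $H$ has finitely many connected components; this is automatic for a semisimple subgroup of a linear semisimple real Lie group, since such a subgroup is the real locus of a linear algebraic group and hence has finitely many components in the analytic topology.

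Having placed ourselves in the hypotheses of Theorem \ref{twg}, I would handle the two claims in turn. For the first claim, I would appeal to part (1) of Theorem \ref{twg}: the assumption $\text{rank}_{a-hyp}\mathfrak{h} = \text{rank}_{a-hyp}\mathfrak{g}$ is exactly the hypothesis of that part, and it rules out any non virtually-abelian discontinuous action on $G/H$. For the second claim I would apply part (2) of Theorem \ref{twg}: the inequality $\text{rank}_{\mathbb{R}}\mathfrak{h} < \text{rank}_{a-hyp}\mathfrak{g}$ matches precisely the hypothesis $\text{rank}_{a-hyp}\mathfrak{g} > \text{rank}_{\mathbb{R}}\mathfrak{h}$, so the existence of a non virtually-abelian discrete subgroup acting discontinuously on $G/H$ follows.

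The statement is a direct corollary, so there is no substantive obstacle; the only point to check carefully is the connected-components hypothesis, which is satisfied by standard structure theory of semisimple linear Lie groups. In particular no new argument beyond combining Theorems \ref{te1} and \ref{twg} is needed.
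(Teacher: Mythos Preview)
Your proposal is correct and follows exactly the route the paper intends: the corollary is stated without proof immediately after Theorem~\ref{te1}, and is meant to be read as the direct combination of Theorem~\ref{te1} (to supply reductivity of $H$ in $G$) with parts (1) and (2) of Theorem~\ref{twg}. Your additional remark on the finite-components hypothesis is a reasonable sanity check that the paper leaves implicit.
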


\begin{theorem}[\cite{mos}]
If $G_{n} \subset G_{n-1} \subset ... \subset G_{0}$ and $G_{i}/G_{i-1}$ is of reductive type (for $1\leq i \leq n$) then $G_{0}/G_{n}$ is a homogeneous space of reductive type.
\end{theorem}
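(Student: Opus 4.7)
The statement reduces immediately to the case $n=2$ by induction: if I can show that whenever $\mathfrak{g}_2\subset\mathfrak{g}_1\subset\mathfrak{g}_0$ with $\mathfrak{g}_1$ reductive in $\mathfrak{g}_0$ and $\mathfrak{g}_2$ reductive in $\mathfrak{g}_1$, the subalgebra $\mathfrak{g}_2$ is reductive in $\mathfrak{g}_0$, then I can climb up the tower one step at a time. So the plan is to concentrate on the two-step case and produce a Cartan involution of $\mathfrak{g}_0$ that preserves $\mathfrak{g}_2$.

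My starting data are a Cartan involution $\theta$ of $\mathfrak{g}_0$ with $\theta(\mathfrak{g}_1)=\mathfrak{g}_1$, coming from the reductivity of $\mathfrak{g}_1$ in $\mathfrak{g}_0$, and a Cartan involution $\theta'$ of $\mathfrak{g}_1$ with $\theta'(\mathfrak{g}_2)=\mathfrak{g}_2$, coming from the reductivity of $\mathfrak{g}_2$ in $\mathfrak{g}_1$. The first key observation is that $\theta|_{\mathfrak{g}_1}$ is itself a Cartan involution of $\mathfrak{g}_1$: the Cartan decomposition $\mathfrak{g}_0=\mathfrak{k}+\mathfrak{p}$ associated with $\theta$ restricts to $\mathfrak{g}_1=(\mathfrak{k}\cap\mathfrak{g}_1)+(\mathfrak{p}\cap\mathfrak{g}_1)$, which is a Cartan decomposition of $\mathfrak{g}_1$ because $\mathfrak{g}_1$ is $\theta$-stable and reductive. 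Now I invoke the classical fact that any two Cartan involutions of a (real) reductive Lie algebra are conjugate by an inner automorphism of the form $\mathrm{Ad}(g)$: there exists $g$ in the identity component of $G_1$ such that
$$\mathrm{Ad}(g)\circ(\theta|_{\mathfrak{g}_1})\circ\mathrm{Ad}(g)^{-1}=\theta'.$$

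Armed with this $g$, I define $\widetilde{\theta}:=\mathrm{Ad}_{G_0}(g)\circ\theta\circ\mathrm{Ad}_{G_0}(g)^{-1}$, which is again a Cartan involution of $\mathfrak{g}_0$ since $\mathrm{Ad}_{G_0}(g)$ is an inner automorphism of $\mathfrak{g}_0$. Because $g\in G_1$, the automorphism $\mathrm{Ad}_{G_0}(g)$ preserves $\mathfrak{g}_1$, and so does $\theta$; hence $\widetilde{\theta}(\mathfrak{g}_1)=\mathfrak{g}_1$, and on $\mathfrak{g}_1$ one has $\widetilde{\theta}|_{\mathfrak{g}_1}=\theta'$. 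Since $\theta'(\mathfrak{g}_2)=\mathfrak{g}_2$ and $\mathfrak{g}_2\subset\mathfrak{g}_1$, this gives $\widetilde{\theta}(\mathfrak{g}_2)=\mathfrak{g}_2$, proving that $\mathfrak{g}_2$ is reductive in $\mathfrak{g}_0$ and completing the two-step case. The induction then finishes the argument: apply the statement to $G_0\supset\cdots\supset G_{n-1}$ to obtain a Cartan involution of $\mathfrak{g}_0$ stabilising $\mathfrak{g}_{n-1}$, and then apply the two-step case with $(\mathfrak{g}_0,\mathfrak{g}_{n-1},\mathfrak{g}_n)$.

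The main delicate point is the conjugacy-of-Cartan-involutions step together with the observation that the conjugating element may be chosen inside $G_1$, so that the conjugation not only converts $\theta|_{\mathfrak{g}_1}$ into $\theta'$ but also extends to an inner automorphism of $\mathfrak{g}_0$ that preserves the chain. This is precisely where one needs the hypothesis that each intermediate quotient is of reductive type (so that each $\mathfrak{g}_i$ is reductive, and the classical conjugacy theorem applies inside $\mathfrak{g}_i$). Once this alignment is done, the proof is a straightforward verification.
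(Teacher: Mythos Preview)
The paper does not supply its own proof of this statement; it is quoted as a result of Mostow \cite{mos} and used without argument. So there is nothing in the paper to compare your proposal against.

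That said, your argument is correct and is precisely the standard transitivity argument (essentially Mostow's, phrased in the Cartan-involution language the paper adopts). The reduction to $n=2$, the fact that the restriction $\theta|_{\mathfrak{g}_1}$ of a Cartan involution of $\mathfrak{g}_0$ to a $\theta$-stable reductive subalgebra is again a Cartan involution of $\mathfrak{g}_1$, and the conjugacy of any two Cartan involutions of $\mathfrak{g}_1$ by an element of the identity component $G_1^\circ$ (in fact by an element of $\exp(\mathfrak{p}_1)$) are exactly the ingredients one needs. One small technical point worth recording: the conjugacy-of-Cartan-involutions theorem is usually stated for semisimple Lie algebras, while $\mathfrak{g}_1$ is only reductive. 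This causes no trouble: both $\theta|_{\mathfrak{g}_1}$ and $\theta'$ preserve the splitting $\mathfrak{g}_1=\mathfrak{z}(\mathfrak{g}_1)\oplus[\mathfrak{g}_1,\mathfrak{g}_1]$; on the semisimple part the standard conjugacy gives an element of the analytic subgroup of $[\mathfrak{g}_1,\mathfrak{g}_1]$, and on the center the two involutions already agree (their $\pm 1$-eigenspaces on $\mathfrak{z}(\mathfrak{g}_1)$ are forced by the compact/noncompact decomposition inherited from $\mathfrak{g}_0$), so no conjugation is needed there.
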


\begin{corollary}
If $0 < \text{\rm rank}_{a-hyp}\mathfrak{g}_{n} = \text{\rm rank}_{a-hyp}\mathfrak{g}_{0},$ then $G_{i}/G_{j}$ does not admit compact Clifford-Klein forms for $i<j$. If $\text{\rm rank}_{\mathbb{R}}\mathfrak{g}_{j} < \text{\rm rank}_{a-hyp}\mathfrak{g}_{i}$ then $G_{i}/G_{j}$ admits a discontinuous action of a non virtually-abelian discrete subgroup.
\end{corollary}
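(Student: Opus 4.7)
The plan is to reduce both claims to Theorem \ref{twg} applied to the reductive pair $(G_i,G_j)$ with $i<j$. Two ingredients are needed: (a) that $G_j$ is reductive in $G_i$, and (b) that the a-hyperbolic rank is monotone under reductive inclusions. Ingredient (a) is obtained from the preceding (Mostow) theorem by an easy induction on $j-i$: each elementary step $G_{k+1}\subset G_k$ is reductive by hypothesis, and Mostow's theorem allows us to compose.

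Ingredient (b) is the only nontrivial point, but it is already latent in the proof of Theorem \ref{twg}. Suppose $\mathfrak{h}\subset\mathfrak{g}$ is reductive. Lemmas \ref{lma} and \ref{lemm2} together say that any $X\in\mathfrak{b}_{[h,h]}^{+}$ defines an antipodal hyperbolic orbit in $\mathfrak{g}$; Theorem \ref{ttt1} then places $X$ in a single $W_{\mathfrak{g}}$-orbit of $\mathfrak{a}$ together with a unique $Y\in\mathfrak{b}^{+}$. Hence
$$\mathfrak{b}_{[h,h]}^{+}\subset\bigcup_{w\in W_{\mathfrak{g}}} w\cdot Span(\mathfrak{b}^{+}).$$
Since each $w\cdot Span(\mathfrak{b}^{+})$ is a linear subspace, Lemma \ref{lin} produces a single $w\in W_{\mathfrak{g}}$ with $\mathfrak{b}_{[h,h]}^{+}\subset w\cdot Span(\mathfrak{b}^{+})$. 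Comparing dimensions yields $\text{rank}_{a-hyp}\mathfrak{h}\leq\text{rank}_{a-hyp}\mathfrak{g}$.

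With (a) and (b) in hand, the first claim is immediate. The chain $G_n\subset G_j\subset G_i\subset G_0$ of reductive inclusions gives, by repeated application of (b),
$$0<\text{rank}_{a-hyp}\mathfrak{g}_n\leq\text{rank}_{a-hyp}\mathfrak{g}_j\leq\text{rank}_{a-hyp}\mathfrak{g}_i\leq\text{rank}_{a-hyp}\mathfrak{g}_0,$$
and equality of the extremes collapses the whole string to equality. In particular $\text{rank}_{a-hyp}\mathfrak{g}_i=\text{rank}_{a-hyp}\mathfrak{g}_j$, so Theorem \ref{twg}(1) forbids non-virtually-abelian discontinuous actions on $G_i/G_j$; by the implication recalled in the introduction (compact Clifford-Klein forms require such actions), $G_i/G_j$ admits no compact Clifford-Klein form. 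The second claim is just Theorem \ref{twg}(2) applied to the reductive pair $(G_i,G_j)$ furnished by (a).

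The only step that requires genuine content is the monotonicity in (b); the rest is bookkeeping around Mostow's theorem and the statement of Theorem \ref{twg}. I expect (b) to go through exactly as sketched, since all the required machinery (Lemmas \ref{lma}, \ref{lemm2}, \ref{lin} and Theorem \ref{ttt1}) has already been assembled.
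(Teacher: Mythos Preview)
Your proposal is correct and follows precisely the route the paper intends: Mostow's theorem applied to the sub-chain $G_j\subset\cdots\subset G_i$ gives reductivity of the pair, the monotonicity $\text{rank}_{a\text{-}hyp}\mathfrak{h}\leq\text{rank}_{a\text{-}hyp}\mathfrak{g}$ (which the paper isolates as the Lemma immediately \emph{after} this Corollary, proved by the same reference to inclusion~(\ref{eqq1})) forces equality along the whole chain, and Theorem~\ref{twg} finishes both claims. The only remark is that your monotonicity argument is exactly the content of that subsequent Lemma, so in the paper's ordering this Corollary is really a joint consequence of Mostow's theorem, Theorem~\ref{twg}, and that Lemma.
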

The following examples are obtained by calculating the a-hyperbolic ranks of the corresponding $\mathfrak{g}$ and $\mathfrak{h}$ (according to Table 1). 

\begin{example}
{\rm The following homogeneous spaces do not admit compact Clifford-Klein forms:}
$$SL(4k+2l,\mathbb{R})/SO(2k,2k)\times Sp(l,\mathbb{R});$$ 
$$SL(2k+2l,\mathbb{R})/Sp(k,\mathbb{R})\times Sp(l,\mathbb{R});$$ 
$$SL(4k+4l,\mathbb{R})/SO(2k,2k)\times SO(2l,2l);$$ 
$$SL(4k+2l+1,\mathbb{R})/SO(2k,2k)\times SO(l,l+1);$$
$$SU^{\ast}(4k+2)/U(s,r-s)\times Sp(t,2k+1-r-t), \ \text{for} \ s+t=k+1, \ 1 \leq r \leq 2k+1;$$
$$SU^{\ast}(4k)/U(s,r-s)\times Sp(t,2k+1-r-t), \ \text{for} \ s+t=k, \ 1 \leq r \leq 2k.$$
\end{example}

\begin{example}
{\rm The following homogeneous spaces admit a discontinuous action of a non virtually-abelian discrete subgroup:}
$$SL(2k+2l+2,\mathbb{R})/SO(k,k+1)\times SO(l,l+1);$$ 
$$SL(2k+2l+2,\mathbb{R})/SO(k,k)\times SO(l,l);$$
$$ E_6^{{\rm I}} / \{ SL(3,\mathbb{C}) \times SU(2,1) \} / \mathbb{Z}_{3} $$
{\rm Note that we use the notation $E_6^{\text{I}}$ for the simply connected real Lie group corresponding to the Lie algebra $\mathfrak{e}_6^{\text{I}}$}. 
\end{example}

\noindent
The a-hyperbolic rank gives us also an easy way of determining if a subalgebra $\mathfrak{h} \subset \mathfrak{g}$ can determine a closed, reductive subgroup $H$ in $G.$

\begin{lemma}
If $H$ is closed and reductive subgroup of $G$  then
$$\text{\rm rank}_{a-hyp}\mathfrak{h}\leq \text{\rm rank}_{a-hyp}\mathfrak{g}.$$
In particular, if $H$ is closed and semisimple subgroup of $G$ then the above condition has to be satisfied.
\end{lemma}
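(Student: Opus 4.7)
The plan is to rerun the key portion of the proof of Theorem \ref{twg} without assuming that the two a-hyperbolic ranks are equal. Since $H$ is closed and reductive in $G$, the subalgebra $\mathfrak{h}$ is reductive in $\mathfrak{g}$, and we may choose a Cartan involution of $\mathfrak{g}$ preserving $\mathfrak{h}$ together with compatible Iwasawa decompositions. In particular $\mathfrak{a}_h\subset\mathfrak{a}$, and by definition $\mathfrak{b}^+$ for $\mathfrak{h}$ is the cone $\mathfrak{b}^+_{[\mathfrak{h},\mathfrak{h}]}$, which sits naturally inside $\mathfrak{a}_h\subset\mathfrak{a}$.

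The main idea is to transfer each $X\in\mathfrak{b}^+_{[\mathfrak{h},\mathfrak{h}]}$ to a point of $\mathfrak{b}^+$ using the Weyl group $W_{\mathfrak{g}}$. By Lemma \ref{lemm2} the orbit $Ad(G)(X)$ is an antipodal hyperbolic orbit of $\mathfrak{g}$; by Lemma \ref{lma} it has the form $Ad(G)(Y)$ for a unique $Y\in\mathfrak{b}^+$; and by Theorem \ref{ttt1} the points $X$ and $Y$ lie in the same $W_{\mathfrak{g}}$-orbit in $\mathfrak{a}$. This gives the inclusion
$$\mathfrak{b}^+_{[\mathfrak{h},\mathfrak{h}]}\subset\bigcup_{w\in W_{\mathfrak{g}}}w\cdot Span(\mathfrak{b}^+),$$
which is a finite union of vector subspaces of $\mathfrak{a}$. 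Applying Lemma \ref{lin} then produces some $w_0\in W_{\mathfrak{g}}$ with $\mathfrak{b}^+_{[\mathfrak{h},\mathfrak{h}]}\subset w_0\cdot Span(\mathfrak{b}^+)$.

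Taking dimensions and using that $w_0$ acts as a linear automorphism of $\mathfrak{a}$ yields
$$\text{\rm rank}_{a-hyp}\mathfrak{h}=\dim\mathfrak{b}^+_{[\mathfrak{h},\mathfrak{h}]}\leq\dim Span(\mathfrak{b}^+)=\text{\rm rank}_{a-hyp}\mathfrak{g},$$
which is the required inequality. The second sentence of the lemma is then automatic: if $H$ is semisimple, Theorem \ref{te1} guarantees that $H$ is reductive in $G$, so the first part applies.

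I do not foresee any real obstacle here. All the non-trivial ingredients, namely Okuda's classification of antipodal hyperbolic orbits, the identification of $W_{\mathfrak{g}}$-orbits in $\mathfrak{a}$, and the single-subspace trick of Lemma \ref{lin}, are already in place by the time this statement is made; the inequality is essentially what the first half of the proof of Theorem \ref{twg} extracts before the equality of a-hyperbolic ranks is invoked. The only mild care needed is to keep $\mathfrak{b}^+_{[\mathfrak{h},\mathfrak{h}]}$ embedded in $\mathfrak{a}$ (not merely in $\mathfrak{a}_h$), which is automatic from the compatible Iwasawa decomposition.
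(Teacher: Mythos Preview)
Your argument is correct and is exactly the paper's approach: the paper's proof simply cites equation~(\ref{eqq1}) from the proof of Theorem~\ref{twg} (whose derivation, as you observed, does not use the equality of a-hyperbolic ranks) together with Theorem~\ref{te1}. You have unpacked that reference faithfully.
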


\begin{proof}
The Lemma follows from equation (\ref{eqq1}) (in the proof of Theorem \ref{twg}) and Theorem \ref{te1}.
\end{proof}

Let $E_{6}^{\text{IV}}$ be a simply connected real Lie group corresponding to $\mathfrak{e}_{6}^{\text{IV}}$
\begin{example}
{\rm Lie group $E_{6}^{\text{IV}}$ does not admit $G_{2},$ $SO(2,3),$ $SO(2,5),$ $SO(2,7)$ and $Sp(2,\mathbb{R})$ as closed subgroups.}
\end{example}

\noindent
We also have the following theorem.

\begin{theorem}
Assume that $G=E^{\text{\rm IV}}_6,SO^{\ast}(6),SL(3,\mathbb{R})$ and $H$ is a non-compact subgroup of reductive type. Then $G/H$ does not admit compact Clifford-Klein forms.
\end{theorem}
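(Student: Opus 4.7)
The plan is to combine Theorem~\ref{twg}(1) with the Kobayashi rank criterion recalled just before Theorem~\ref{twg}, following a short case analysis on $\text{rank}_{\mathbb{R}}\mathfrak{h}$ and on the structure of $[\mathfrak{h},\mathfrak{h}]$. From Table~1 one reads $\text{rank}_{a-hyp}\mathfrak{g}=1$ in all three cases while the real ranks are $2,1,2$ respectively, so the Lemma preceding the theorem already gives $\text{rank}_{a-hyp}\mathfrak{h}\leq 1$ for every reductive $\mathfrak{h}\subset\mathfrak{g}$.

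For $G=SO^{\ast}(6)$ the proof is immediate: $\text{rank}_{\mathbb{R}}\mathfrak{g}=1$ combined with the non-compactness of $H$ forces $\text{rank}_{\mathbb{R}}\mathfrak{h}=1$, so the Kobayashi rank criterion excludes infinite discontinuous subgroups and \emph{a fortiori} compact Clifford--Klein forms. For $G=E_6^{\text{IV}}$ and $G=SL(3,\mathbb{R})$, of real rank $2$, the case $\text{rank}_{\mathbb{R}}\mathfrak{h}=2$ is dispatched the same way. In the remaining case $\text{rank}_{\mathbb{R}}\mathfrak{h}=1$ the aim is to show $\text{rank}_{a-hyp}\mathfrak{h}=1$ and apply Theorem~\ref{twg}(1). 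The essential input is the general observation that every non-compact simple real Lie algebra has $\text{rank}_{a-hyp}\geq 1$: outside the types $A_n, D_{2k+1}, E_6$ the involution $\iota$ is trivial and $\text{rank}_{a-hyp}=\text{rank}_{\mathbb{R}}\geq 1$, while for the exceptional types this is read off from Table~1 together with the matching and $\iota$-invariance procedure of Step~2. Hence whenever $[\mathfrak{h},\mathfrak{h}]$ contains a non-compact simple factor, $\text{rank}_{a-hyp}\mathfrak{h}\geq 1$, and equality with $\text{rank}_{a-hyp}\mathfrak{g}$ follows from the upper bound.

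The main obstacle is the sub-case in which $[\mathfrak{h},\mathfrak{h}]$ is compact, so the non-compactness of $H$ is carried entirely by a one-dimensional split factor in the centre of $\mathfrak{h}$; then $\text{rank}_{a-hyp}\mathfrak{h}=0$ by the convention fixing $\mathfrak{b}^{+}$ on $[\mathfrak{h},\mathfrak{h}]$, and Theorem~\ref{twg}(1) is silent. To close this gap I would return to Benoist's criterion (Theorem~\ref{be}) and analyse the position of the one-dimensional line $\mathfrak{a}_h\subset\mathfrak{a}$ directly: if some Weyl translate $w\cdot\mathfrak{a}_h$ contains the ray $\mathfrak{b}^{+}$ then Benoist excludes non-virtually-abelian discontinuous subgroups (hence all compact forms); otherwise one appeals to Kobayashi's standard-quotient framework, since a compact Clifford--Klein form would require a reductive $L\subset G$ with $L\cap H$ compact, $LH$ open in $G$ and $\dim L\geq\dim G-\dim H$, and such an $L$ can be ruled out by a direct enumeration of reductive subgroups (for $SL(3,\mathbb{R})$ no proper reductive subgroup has dimension $\geq 7$, and for $E_6^{\text{IV}}$ a short case-by-case check of the Cartan-invariant reductive subgroups of $\mathfrak{e}_6^{\text{IV}}$ suffices). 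Finishing this last sub-case cleanly is where the bulk of the technical work lies.
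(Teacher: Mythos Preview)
Your overall strategy and the identification of the problematic sub-case are correct, but the resolution you sketch for that sub-case is both incomplete and far more laborious than what is actually needed. The paper's argument dispatches the case $\text{rank}_{a-hyp}\mathfrak{h}=0$ (equivalently, $[\mathfrak{h},\mathfrak{h}]$ compact) in one line by invoking a result of Benoist and Labourie (Corollary~1 in \cite{benl}): if $G/H$ admits a compact Clifford--Klein form, then the centre of $H$ must be compact. Combined with the compactness of $[\mathfrak{h},\mathfrak{h}]$ this forces $H$ itself to be compact, contradicting the hypothesis. No analysis of Weyl translates of $\mathfrak{a}_h$, no appeal to the standard-quotient machinery, and no enumeration of reductive subgroups of $E_6^{\text{IV}}$ is required.

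Two further remarks on your write-up. First, the paper organises the dichotomy by $\text{rank}_{a-hyp}\mathfrak{h}\in\{0,1\}$ rather than by $\text{rank}_{\mathbb{R}}\mathfrak{h}$; this is cleaner because it treats all three groups uniformly (your separate treatment of $SO^{\ast}(6)$ is unnecessary, and your case $\text{rank}_{\mathbb{R}}\mathfrak{h}=2$ is subsumed in $\text{rank}_{a-hyp}\mathfrak{h}=1$ anyway, since $\text{rank}_{a-hyp}\mathfrak{h}\leq 1$ by the Lemma). Second, your fallback plan in the ``otherwise'' branch is shaky: if no Weyl translate of $\mathfrak{a}_h$ contains $\mathfrak{b}^{+}$, Benoist's criterion tells you that non virtually-abelian discontinuous groups \emph{do} exist, which gives no obstruction to compact forms, so the weight would rest entirely on the subgroup enumeration you leave unproved. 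The Benoist--Labourie compact-centre obstruction is the missing ingredient that closes the argument without any of this.
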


\begin{proof}
Note that $\text{rank}_{a-hyp}\mathfrak{g}=1.$ If $\text{rank}_{a-hyp}\mathfrak{h}=1$ then $G/H$ does not admit Clifford-Klein forms. Consider the case $\text{rank}_{a-hyp}\mathfrak{h}=0$. If $G/H$ admitted compact Clifford-Klein forms, then $H$ would have compact center by Corollary 1 in \cite{benl}. But the latter, together with vanishing $\text{rank}_{a-hyp}\mathfrak{h}$ would imply $\text{rank}_{\mathbb{R}}\mathfrak{h}=0$. Hence, $H$ would be compact, a contradiction. 
\end{proof}

\noindent
Note that this property was already known for $G=SL(3,\mathbb{R})$ (Prop. 1.10 in \cite{ow}).
\vskip6pt
Now we mention an observation with possible applications to symplectic topology. It is based on the following result. 
\begin{theorem}[\cite{kob}]
If $X \in \mathfrak{g}$ is a semisimple element, then the semisimple orbit $G/Z_{G}(X) \cong Ad(G)(X)$ is a homogeneous space of reductive type, where 
$$Z_{G}(X) := \{ g \in G \ | \ Ad(g)X = X \}.$$
\end{theorem}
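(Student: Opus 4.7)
The plan is to produce a Cartan involution $\theta$ of $\mathfrak{g}$ that preserves the centralizer $\mathfrak{z}_{\mathfrak{g}}(X)=\{Y\in\mathfrak{g}\,|\,[Y,X]=0\}$; by the definition of reductive subalgebra adopted in this paper, this will show that $Z_G(X)$ is reductive in $G$. The identification $G/Z_G(X)\cong \mathrm{Ad}(G)(X)$ is then just the orbit-stabilizer theorem applied to the adjoint action, so the real content is the reductivity assertion.

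First I would invoke the Jordan decomposition of a semisimple element in a real semisimple Lie algebra: one writes $X=X_e+X_h$ where $X_e$ is elliptic (so $\mathrm{ad}\,X_e$ is diagonalizable over $\mathbb{C}$ with purely imaginary eigenvalues), $X_h$ is hyperbolic (so $\mathrm{ad}\,X_h$ is diagonalizable with real eigenvalues), and $[X_e,X_h]=0$. Since $\mathrm{ad}\,X_e$ and $\mathrm{ad}\,X_h$ commute, they are simultaneously diagonalizable over $\mathbb{C}$; because a sum $i\lambda+\mu$ with $\lambda,\mu\in\mathbb{R}$ vanishes only when $\lambda=\mu=0$, a brief eigenspace argument yields
\[
\mathfrak{z}_{\mathfrak{g}}(X)=\mathfrak{z}_{\mathfrak{g}}(X_e)\cap\mathfrak{z}_{\mathfrak{g}}(X_h).
\]

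The crux of the argument is then to choose a Cartan decomposition $\mathfrak{g}=\mathfrak{k}+\mathfrak{p}$ such that $X_e\in\mathfrak{k}$ and $X_h\in\mathfrak{p}$. Its existence is guaranteed by a classical theorem of Mostow on the simultaneous conjugation of a commuting pair consisting of an elliptic and a hyperbolic element into the corresponding summands of a Cartan decomposition; this is the main technical obstacle, since everything else reduces to bracket manipulations. For the resulting Cartan involution $\theta$ one has $\theta(X_e)=X_e$ and $\theta(X_h)=-X_h$.

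It remains to check that $\theta$ preserves $\mathfrak{z}_{\mathfrak{g}}(X)$. For any $Y\in\mathfrak{z}_{\mathfrak{g}}(X_e)$ one computes $[\theta(Y),X_e]=[\theta(Y),\theta(X_e)]=\theta[Y,X_e]=0$, so $\theta(\mathfrak{z}_{\mathfrak{g}}(X_e))\subseteq\mathfrak{z}_{\mathfrak{g}}(X_e)$; a parallel calculation, using the sign in $\theta(X_h)=-X_h$, gives $[\theta(Y),X_h]=-\theta[Y,X_h]=0$ for $Y\in\mathfrak{z}_{\mathfrak{g}}(X_h)$, hence $\theta(\mathfrak{z}_{\mathfrak{g}}(X_h))\subseteq\mathfrak{z}_{\mathfrak{g}}(X_h)$. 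Intersecting these inclusions and using that $\theta$ is involutive gives $\theta(\mathfrak{z}_{\mathfrak{g}}(X))=\mathfrak{z}_{\mathfrak{g}}(X)$, which is exactly what is needed to conclude that $Z_G(X)$ is reductive in $G$.
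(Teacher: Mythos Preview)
The paper does not prove this theorem at all; it is simply quoted from Kobayashi \cite{kob} and used as a black box, so there is no ``paper's own proof'' to compare against. Your argument is a correct and standard way to establish the result.

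One remark on the step you flag as the main technical obstacle: invoking Mostow is fine, but you can bypass it with a more elementary observation already implicit in the paper's setup. Since $X$ is semisimple it lies in a Cartan subalgebra $\mathfrak{j}_{\mathfrak{g}}$ of $\mathfrak{g}$, and every Cartan subalgebra is $G$-conjugate to a $\theta$-stable one of the form $\mathfrak{t}+\mathfrak{a}$ with $\mathfrak{t}\subset\mathfrak{k}$ and $\mathfrak{a}\subset\mathfrak{p}$ for some Cartan involution $\theta$. Writing $X=X_{\mathfrak{t}}+X_{\mathfrak{a}}$ accordingly, the components commute (the subalgebra is abelian), $\mathrm{ad}\,X_{\mathfrak{t}}$ has purely imaginary eigenvalues, and $\mathrm{ad}\,X_{\mathfrak{a}}$ has real eigenvalues; by uniqueness of the elliptic--hyperbolic decomposition this forces $X_e=X_{\mathfrak{t}}\in\mathfrak{k}$ and $X_h=X_{\mathfrak{a}}\in\mathfrak{p}$, which is exactly the placement you need. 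The rest of your bracket computation then goes through verbatim.
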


\noindent
The above property shows us an interesting method of checking if a given elliptic orbit can be compactified.

\begin{example}
{\rm Every elliptic orbit of $SL(4, \mathbb{R})$ with a non-compact isotropy subgroup (with compact center) admits a discontinuous action of a non virtually-abelian discrete subgroup (please refer to \cite{bou} for a classification of the isotropy subgroups of elliptic orbits). }
\end{example}

\section{$3$-Symmetric spaces}

\begin{definition}
{\rm  A $k$-symmetric space is a triple $(G,H,\sigma)$, where $G$ is a connected Lie group, $H \subseteq G$ is a closed subgroup, and $\sigma :G \rightarrow G$ is an automorphism of G such that:}
\begin{enumerate}
	\item {\rm $\sigma^{k} =id$ and $k\geq2$ is the least integer with this property,
	\item ${(G^{\sigma})}^{o} \subset H \subset G^{\sigma}$ , where $G^{\sigma} =\{ g\in G \mid \sigma (g)=g \}$ , and ${(G^{\sigma})}^{o}$ is the identity component of $G^{\sigma}$.}
\end{enumerate}
\label{dd}
\end{definition}
Clearly, if $k=2$ we get a class of symmetric spaces, that is, homogeneous spaces generated by involutive automorphisms. For the general theory we refer to \cite{kow}.
In this article we consider the case $k=3.$ Simply connected 3-symmetric spaces with an effective action of $G$ where classified by J. Wolf and A. Gray in \cite{g1} and \cite{g2}. Here is their classification result (contained in Tables 7.11-7.14 in the cited paper).

\begin{theorem}[Wolf and Gray]\label{thm:w-g} Let $G/H$ be a simply connected coset space where $G$ is a connected Lie group acting effectively. Suppose $\mathfrak{h}=\mathfrak{g}^{\sigma}$ where $\sigma$ is an automorphism of order 3 on $\mathfrak{g}$ which does not preserve any proper ideals. Then $G$ is reductive, $H$ is a closed reductive subgroup and Tables 7.11-7.14 in \cite{g2} give a complete list of the possibilities (up to isomorphism).
\end{theorem}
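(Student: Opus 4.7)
The plan is to classify (up to isomorphism) triples $(\mathfrak{g}, \mathfrak{h}, \sigma)$ where $\sigma$ is an order-three automorphism of $\mathfrak{g}$ with no invariant proper ideal, and then pass back to the groups $G$ and $H$. I would organize this in three steps: a structural reduction on $\mathfrak{g}$, a Kac-type enumeration in the complex simple case, and a descent to real forms.

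First I would decompose $\mathfrak{g}$ as the sum of its center and its simple ideals. The hypothesis that $\sigma$ fixes no proper ideal, together with $\sigma^3 = \mathrm{id}$, forces the center to vanish and $\langle\sigma\rangle$ to act transitively on the set of simple ideals. Since the orbit length divides three, either $\mathfrak{g}$ is simple, or $\mathfrak{g} = \mathfrak{g}_1 \oplus \sigma(\mathfrak{g}_1) \oplus \sigma^2(\mathfrak{g}_1)$ with $\mathfrak{g}_1$ simple; in the latter case $\mathfrak{g}^{\sigma}$ is the diagonal copy of $\mathfrak{g}_1$, which integrates to the family $(G_1 \times G_1 \times G_1)/\Delta G_1$ filling one row of the Wolf-Gray tables.

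Second, for $\mathfrak{g}$ simple I would invoke Kac's classification of finite-order automorphisms of a complex simple Lie algebra $\mathfrak{g}^{\mathbb{C}}$: conjugacy classes of order-$k$ automorphisms are in bijection with pairs consisting of a diagram automorphism $\epsilon$ of order dividing $k$ together with a labeling $(s_0, \ldots, s_r)$ of the nodes of the $\epsilon$-twisted affine Dynkin diagram by coprime non-negative integers satisfying $\sum m_i s_i = k/|\epsilon|$, where $m_i$ are the Kac marks. Specializing to $k = 3$ cuts the possibilities to a short enumeration for each complex simple type, with $|\epsilon| = 3$ occurring only in type $D_4$ (triality). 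For each admissible labeling one reads off $(\mathfrak{g}^{\mathbb{C}})^{\sigma}$ directly: its semisimple part is the Levi factor spanned by the simple roots with $s_i = 0$, and its center has dimension one less than the number of nonzero $s_i$.

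The third step, and the main obstacle, is descent to real forms. Each real form $\mathfrak{g}_0$ of $\mathfrak{g}^{\mathbb{C}}$ is specified by a conjugation $\tau$, and order-three automorphisms of $\mathfrak{g}_0$ correspond to order-three automorphisms of $\mathfrak{g}^{\mathbb{C}}$ commuting with $\tau$ up to conjugation; equivalently, after fixing a Cartan involution $\theta$ one enumerates order-three automorphisms commuting with $\theta$ inside the Kac list. Combining this with the Satake-diagram description of real forms produces the classification that assembles the Wolf-Gray tables. Reductivity is essentially free: $\mathfrak{h} = \mathfrak{g}^{\sigma}$ is reductive because the fixed subalgebra of any finite-order automorphism of a reductive Lie algebra is reductive, and $H$ is closed as an open subgroup of the closed subgroup $G^{\sigma}$. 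The hard part will be the casework across every simple complex type together with each of its real forms, which is long but entirely mechanical once Kac's theorem is in place.
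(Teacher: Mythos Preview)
The paper does not prove this theorem at all: it is quoted verbatim as the classification result of Wolf and Gray \cite{g1,g2}, and the authors use it only as input for their Table~2. There is therefore no ``paper's own proof'' to compare your proposal against.

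Your outline is nonetheless a standard and essentially correct route to the Wolf--Gray classification. One small gap: in your first step you decompose $\mathfrak{g}$ as center plus simple ideals, which already presupposes that $\mathfrak{g}$ is reductive --- but reductivity is part of the \emph{conclusion}. The clean fix is to observe that the radical $\mathfrak{r}$ is characteristic, hence $\sigma$-invariant; the hypothesis then forces $\mathfrak{r}=0$ or $\mathfrak{r}=\mathfrak{g}$, and in the latter case the derived ideal (again characteristic) forces $\mathfrak{g}$ abelian. The abelian possibility (e.g.\ $\mathfrak{g}=\mathbb{R}^2$ with $\sigma$ a rotation by $2\pi/3$) is not excluded by your ``center vanishes'' claim and should be handled separately; after that your orbit argument on the simple ideals goes through. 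Steps two and three (Kac labelings on the twisted affine diagram, then descent to real forms by requiring compatibility with a Cartan involution) are the modern way to reproduce the Wolf--Gray tables; note that Wolf and Gray's original 1968 argument predates the general form of Kac's theorem and proceeds by more direct case analysis, so your approach is a streamlined reworking rather than a reproduction of theirs.
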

\begin{remark} {\rm Note that in \cite{g2} the authors use $G^*$ and $K^*$ instead of $G$ and $H$.}
\end{remark}

As an application of Theorem \ref{twg} we  give a list of simply connected and simple (i.e. $G$ is a simple Lie group) real 3-symmetric spaces $M=G/H$ with an effective action of $G$ admitting discontinuous action of some non virtually-abelian, discrete subgroup $\Gamma \subset G.$ The table below lists all such spaces with only one exception:
$$M=SO(2k+1,2k+1)/(U(1,1)\times SO(2k-1,2k-1))$$
since this space does not fulfill any condition from Theorem \ref{twg}. In this paper we will not study if $M$ admits or not non virtually-abelian discontinuous groups.
To obtain this list we take the classification of (non-compact) simply connected, $3$-symmetric spaces with an effective action of $G$ obtained in Theorem \ref{thm:w-g}, choose the corresponding pairs from Tables 7.11-7.14 in \cite{g2} and check case by case the a-hyperbolic ranks and real ranks of $\mathfrak{g}$ and $\mathfrak{h}$ of every space listed there with simple $G.$ We complete our classification using Theorem \ref{twg}.

\FloatBarrier
 \begin{table}[ht]
 \centering
 {\footnotesize
 \begin{tabular}{| c | c |}
   \hline
   \multicolumn{2}{|c|}{Table 2. Non-compact, simple 3-symmetric spaces admitting a discontinuous} \\
	\multicolumn{2}{|c|}{action of a non virtually-abelian discrete subgroup.} \\
   \hline  
   G & H \\
   \hline 
   $SL(2n,\mathbb{R})/\mathbb{Z}_{2}$ & $\{ SL(n,\mathbb{C})\times T^{1} \}/\mathbb{Z}_{n}$ \\
   \hline
   $SO(2n+1-2s-2t,2s+2t)$ & $U(a-s,s)\times SO(2n-2a+1-2t,2t)$ \\
     & \footnotesize  $1\leq a\leq n, \ 2\leq 2s\leq a$ \\
   \hline
   $Sp(n,\mathbb{R})/\mathbb{Z}_{2}$ & $\{ U(a-s,s)\times Sp(n-a, \mathbb{R}) \}/\mathbb{Z}_{2}$ \\
     & \footnotesize $1\leq a \leq n, \ 2\leq 2s \leq a$ \\
   \hline
   $SO(2n-2s-2t,2s+2t)/\mathbb{Z}_{2}$ & $\{ U(a-s,s)\times SO(2n-2a-2t,2t) \}\mathbb{Z}_{2}$ \\
     & \footnotesize $1\leq a \leq n, \ 0\leq 2s \leq a, \ 0\leq 2t \leq n-a, (s,t)\neq (0,0) $\\
   \hline
   $SO^{\ast}(2n)/\mathbb{Z}_{2}$ & $\{ U(a-s,s)\times SO^{\ast}(2n-2a) \}\mathbb{Z}_{2}$ \\
     & \footnotesize $1\leq a \leq n, \ 0\leq 2s \leq a$ \\
   \hline
     $G_{2}$ & $U(1,1), \ SU(2,1)$ \\
   \hline
   $F_{4}^{\text{I}}$ & $\{ Spin(7-r,r)\times T^{1} \}/\mathbb{Z}_{2}, \ r=2,3$ \\
     & $\{ Sp(3,\mathbb{R})\times T^{1} \}/\mathbb{Z}_{2}, \ \{ Sp(2,1)\times T^{1} \}/\mathbb{Z}_{2}$ \\
     & $\{ SU(3)\times SU(2,1) \} / \mathbb{Z}_{3} \ \{ SU(2,1)\times SU(2,1) \} / \mathbb{Z}_{3}$ \\
		\hline
	
	\end{tabular}
 }
 \label{tab23}
 \end{table}

 \begin{table}
 \centering
 {\footnotesize
 \begin{tabular}{| c | c |}
   \hline
   \multicolumn{2}{|c|}{Table 2. continuation} \\

   \hline  
	 G & H \\
   \hline

	 $E_{6}^{\text{I}}$ & $\{ SL(3,\mathbb{C}) \times SU(2,1) \} / \mathbb{Z}_{3} $ \\
	 \hline
   $E_{6}^{\text{II}}$ & $\{ SO^{\ast}(10)\times SO(2) \}/\mathbb{Z}_{2}$ \\
     & $\{ S(U(5-p,p)\times U(1))\times SU(2-s,s) \}/\mathbb{Z}_{2},$ \\
     & $(s,p)=(0,1),(0,2),(1,2)$ \\
     & $\{ [SU(6-p,p)/\mathbb{Z}_{3}]\times T^{1}  \}/\mathbb{Z}_{2}, \ p=0,2,3$ \\
     & $\{ [SO^{\ast}(8)\times SO(2)]\times SO(2) \}/\mathbb{Z}_{2}$ \\
     & $\{ [SO(6,2)\times SO(2)]\times SO(2) \}/\mathbb{Z}_{2}$ \\
		 & $\{ SU(2,1) \times SU(3)\times SU(3) \} / \{ \mathbb{Z}_{2} \times \mathbb{Z}_{3} \}$ \\  
	   &  $\{ SU(2,1) \times SU(2,1)\times SU(2,1) \} / \{ \mathbb{Z}_{2} \times \mathbb{Z}_{3} \}$ \\
   \hline
   $E_{6}^{\text{III}}$ & $\{ S(U(5-p,p)\times U(1))\times SU(2-s,s) \}/\mathbb{Z}_{2},$ \\    
     & $(s,p)=(1,0),(0,1),(1,1)$ \\
     & $\{ [SU(5,1)/\mathbb{Z}_{3}]\times T^{1} \}/\mathbb{Z}_{2}$ \\
   \hline
   $E_{7}^{\text{V}}$ & $\{ E_{6}^{\text{II}}\times T^{1} \}/\mathbb{Z}_{2}, \ \{ SU(2)\times [SO^{\ast}(10)\times SO(2)] \}/\mathbb{Z}_{2}$ \\
     & $\{ SU(1,1)\times [SO(6,4)\times SO(2)] \}/\mathbb{Z}_{2}$ \\
     & $\{ SO(2)\times SO^{\ast}(12) \}/\mathbb{Z}_{2}, \  \{ SO(2)\times SO(6,6) \}/\mathbb{Z}_{2}$ \\
     & $S(U(4,3)\times U(1))/\mathbb{Z}_{4}$ \\
		 & $\{ SU(3)\times [SU(5,1)/ \mathbb{Z}_{2}]/ \mathbb{Z}_{3}, \ \{ SU(2,1)\times [SU(3,3)/ \mathbb{Z}_{2}]/ \mathbb{Z}_{3}$ \\
   \hline
   $E_{7}^{\text{VI}}$ & $\{ E_{6}^{\text{III}}\times T^{1} \}/\mathbb{Z}_{2}$ \\
     & $\{ SU(2-p,p)\times [SO(10-s,s)\times SO(2)] \}/\mathbb{Z}_{2}$  \\
		 &  $(p,s)=(0,2),(1,2)$ \\
     & $\{ SU(1,1)\times [SO^{\ast}(10)\times SO(2)] \}/\mathbb{Z}_{2}$ \\
     & $S(U(7-s,s)\times U(1))/\mathbb{Z}_{4}, \ s=1,2,3$ \\
		 & $\{ SU(2,1)\times [SU(6)/ \mathbb{Z}_{2}]/ \mathbb{Z}_{3}, \ \{ SU(3)\times [SU(4,2)/ \mathbb{Z}_{2}]/ \mathbb{Z}_{3}$ \\
	   & $\{ SU(2,1)\times [SU(4,2)/ \mathbb{Z}_{2}]/ \mathbb{Z}_{3}$ \\
   \hline
   $E_{7}^{\text{VII}}$ & $ \{ E_{6}^{\text{III}} \times T^{1} \}/\mathbb{Z}_{2}$ \\
    & $\{ SU(1,1)\times [SO(10)\times SO(2)] \}/\mathbb{Z}_{2}$ \\ 
		&  $\{ SU(2)\times [SO^{\ast}(10)\times SO(2)] \}/\mathbb{Z}_{2}$ \\
    & $ \{ SO(2)\times SO(10,2) \}/\mathbb{Z}_{2}$ \\ 
		&  $S(U(7-s,s)\times U(1))/\mathbb{Z}_{4}, \ s=1,2$ \\
		& $\{ SU(2,1)\times [SU(5,1)/ \mathbb{Z}_{2}]/ \mathbb{Z}_{3}$ \\
   \hline
   $E_{8}^{\text{VIII}}$ & $SO(8,6)\times SO(2), \ SO^{\ast}(14)\times SO(2)$ \\
     & $\{ E_{7}^{\text{VI}}\times T^{1} \}/\mathbb{Z}_{2}, \ \{ E_{7}^{V}\times T^{1} \}/\mathbb{Z}_{2}$ \\
		 & $\{ SU(3)\times E_{6}^{\text{III}}  \} / \mathbb{Z}_{3}, \ \{ SU(2,1)\times E_{6}^{\text{II}}  \} / \mathbb{Z}_{3}$ \\
	   & $\{ SU(8,1)  \} / \mathbb{Z}_{3}, \ \{ SU(5,4) \} / \mathbb{Z}_{3}$ \\
   \hline
   $E_{8}^{\text{IX}}$ & $SO(12,2)\times SO(2), \ SO^{\ast}(14)\times SO(2) \ \{ E_{7}^{\text{VII}}\times T^{1} \}/\mathbb{Z}_{2}$ \\
     & $\{ SU(2,1)\times E_{6}  \} / \mathbb{Z}_{3}, \ \{ SU(2,1)\times E_{6}^{III}  \} / \mathbb{Z}_{3}$ \\
	   & $\{ SU(3)\times E_{6}^{\text{II}}  \} / \mathbb{Z}_{3}, \ \{ SU(7,2)  \} / \mathbb{Z}_{3}, \ \{ SU(6,3)  \} / \mathbb{Z}_{3}$ \\
	\hline
	 $SO(4,4)$ & $\{ SU(2,1) \} / \mathbb{Z}_{3}$ \\
	\hline
	 $Spin(5,3)$ & $G_{2}$ \\
	\hline
	 $Spin(4,4)$ & $G_{2}$ \\
	\hline \hline
 \end{tabular}
 }
 \label{tab23}
 \end{table}

\newpage

Department of Mathematics and Computer Science

University of Warmia and Mazury

S\l\/oneczna 54, 10-710, Olsztyn, Poland

e-mail: mabo@matman.uwm.edu.pl (MB), 

tralle@matman.uwm.edu.pl (AT)


\begin{thebibliography}{99}

\bibitem{ben} Y. Benoist,  {\it Actions propres sur les espaces homogenes reductifs},  Ann. of Math. 144 (1996),  315-347.

\bibitem{benl} Y. Benoist, F. Labourie, {\it Sur les espaces homogenes modeles de varietes compactes},  Publications Mathematiques de I.H.E.S 76 (1992) 99-109.

\bibitem{bt} M. Boche\'nski, A. Tralle,  {\it Generalized symplectic symmetric spaces},  Geom. Dedicata, 2013, DOI 10.1007/s10711-013-9902-x 

\bibitem{bou} N. Boumuki,  {\it Isotropy subalgebras of elliptic orbits in semisimple Lie algebras, and the canonical representatives of pseudo-Hermitian symmetric elliptic orbits},  J. Math. Soc. Japan 59 (2007),  1135-1177.

\bibitem{g1} A. Gray, J. Wolf,  {\it Homogeneous spaces defined by Lie group automorphisms I,},  J. Differential Geometry 2 (1968),  77-114.

\bibitem{g2} A. Gray, J. Wolf,  {\it Homogeneous spaces defined by Lie group automorphisms II,},  J. Differential Geometry 2 (1968),  115-159.

\bibitem{h} S. Helgason, {\it Differential Geometry, Lie Groups and Symmetric Spaces,}  AMS, Providence, RI, 2001

\bibitem{kn} A. Knapp, {\it Representation theory of semisimple Lie groups}, Princeton 1986.

\bibitem{ko} T. Kobayashi, T. Yoshino, {\it Compact Clifford-Klein forms of symmetric spaces revisited}, Pure Appl. Math. Quart. 1(2005), 603-684

\bibitem{kob} T. Kobayashi, {\it Discontinuous groups and Clifford-Klein forms of pseudo-Riemannian homogeneous manifolds}, Perspectives in Mathematics, 17 (1996), 99-165.

\bibitem{kob1} T. Kobayashi, {\it Proper actions on a homogeneous space of reductive type}, Math. Ann. 285(1989), 249-263
 
\bibitem{kob2} T. Kobayashi, {\it Discrete decomposability of the restriction of $A_q(\lambda)$ with respect to reductive subgroups and its applications,} Invent. Math. 117(1994), 181-205
 
\bibitem{kow} O. Kowalski,  {\it Generalized Symmetric Spaces,}  Springer, Berlin, 1980.

\bibitem{min} A. N. Minchenko, {\it The semisimple subalgebras of exceptional Lie algebras,} Trans. Moscow Math. Soc. 67 (2006), 225-259.

\bibitem{mos} G. D. Mostow, {\it Self-adjoint groups,} Ann. of Math. 62 (1955), 44-55.

\bibitem{ow} H. Oh, D. Witte,  {\it Compact Clifford-Klein forms of homogeneous spaces of SO(2,n),}  Geom. Dedic. 89 (2002), 25-57.

\bibitem{ok} T. Okuda,  {\it Classification of Semisimple Symmetric Spaces with $SL(2, \mathbb{R})$-proper Actions,}  J. Different. Geom. 94 (2013), 301-342.

\bibitem{ov} A. L. Onishchik, E. B. Vinberg, {\it Lie Groups and Lie Algerbas III}, Springer, 2004.

\bibitem{yo} K. Yosida, {\it A theorem concerning the semisimple Lie groups,} Tohoku Math. J. 44 (1938), 81-84.

\end{thebibliography}
\end{document}